\newtheorem{theorem}{Theorem}[section]
\newtheorem{lemma}[theorem]{Lemma}
\newtheorem{corollary}[theorem]{Corollary}
\newtheoremstyle{definition}
  {6pt}
  {6pt}
  {}
  {}
  {\bfseries}
  {.}
  {.5em}
  {}%
\theoremstyle{definition}
\newtheorem{definition}[theorem]{Definition}
\newtheoremstyle{remark}
  {6pt}
  {6pt}
  {}
  {}
  {\bfseries}
  {.}
  {.5em}
  {}%
\theoremstyle{remark}
\newtheorem{remark}[theorem]{Remark}
\newtheoremstyle{note}
  {6pt}
  {6pt}
  {}
  {}
  {\bfseries}
  {.}
  {.5em}
  {}%
\theoremstyle{note}
\renewcommand\@makefntext[1]{%
\setlength\parindent{1em}%
\noindent \makebox[1.8em][r]{}{#1}} \makeatother
\begin{document}
\parskip 4pt
\large \setlength{\baselineskip}{15 truept}
\setlength{\oddsidemargin} {0.5in} \overfullrule=0mm
\def\bfh{\vhtimeb}
\date{}
\title{\bf \large TOWARD MIXED MULTIPLICITIES\\ AND JOINT
REDUCTIONS}
\def\b{\vntime}
\author{
 Truong Thi Hong Thanh  and  Duong Quoc Viet   \\
}
 \date{}
\maketitle \centerline{
\parbox[c]{10.5cm}{
\small{\bf ABSTRACT:} In the direction towards the question when mixed multiplicities are equal to the
Hilbert-Samuel multiplicity of joint reductions, this paper not only generalizes \cite[Theorem 3.1]{VDT} that covers the Rees's theorem \cite[Theorem 2.4]{Re}, but also removes the hypothesis that
 joint reductions are systems of parameters in \cite[Theorem 3.1]{VDT}. The results of the paper seem to make the problem of expressing mixed
multiplicities into the Hilbert-Samuel multiplicity of joint
reductions become closer.
 }}

 \section{Introduction} \noindent
Let $(A, \frak{m})$ be a Noetherian local ring  with maximal ideal
$\mathfrak{m}$ and infinite residue field $k = A/\mathfrak{m}.$  Let $M$
be a finitely generated $A$-module.   Let $J$ be an $\frak
m$-primary ideal, $ I_1,\ldots, I_d$ be ideals of $A.$    Put  $I
= I_1\cdots I_d;$ $\overline {M}= M/0_M: I^\infty$ and
\begin{align*}
&{\bf n} =(n_1,\ldots,n_d);{\bf k} = (k_1,\ldots,k_d); {\bf
0}=(0,\ldots,0);
\\
&\mathbf{e}_i = (0, \ldots,  1,  \ldots, 0)\in \mathbb{N}^{d}
(\text{the }i\text{th  coordinate  is  } 1);\\
& \mathrm{\bf I}= I_1,\ldots,I_d;  \mathrm{\bf I}^{[\mathrm{\bf
k}]}= I_1^{[k_1]},
  \ldots,I_d^{[k_d]};  \mathbb{I}^{\mathrm{\bf n}}= I_1^{n_1}\cdots I_d^{n_d};
 \\
&{\bf n^k}= n_1^{k_1}\cdots n_d^{k_d}; \mathbf{k}!= k_1!\cdots
k_d!; \;|{\bf k}| = k_1 + \cdots + k_d.
\end{align*}
  Assume  that $I
\nsubseteq \sqrt{\mathrm{Ann}M},$ then $\overline {M} \not= 0.$
Set  $q=\dim \overline {M}$.
 Then
$\ell\Big(\dfrac{J^{n_0}\mathbb{I}^{\bf
n}M}{J^{n_0+1}\mathbb{I}^{\bf n}M}\Big)$ is a  polynomial of total
degree $q-1$ for all large enough $n_0, \bf n$ by
\cite[Proposition 3.1]{Vi} (see \cite{MV}). Denote by $P$ this
polynomial.  Write the terms of total degree $q-1$  of
$P$ in the form $\sum_{k_0 + |\mathbf{k}| = q - 1}
e(J^{[k_0+1]},\mathrm{\bf I}^{[\mathrm{\bf k}]};
M)\frac{n_0^{k_0}\mathbf{n}^\mathbf{k}}{k_0!\mathbf{k}!},$ then
$e(J^{[k_0+1]},
\mathbf{I}^{[\mathbf{k}]}; M)$
 are non-negative integers not all zero, and
 $e(J^{[k_0+1]},
\mathbf{I}^{[\mathbf{k}]}; M)$ is called   the {\it mixed
multiplicity of $M$ with respect to $J, \bf I$ of the type
$(k_0+1, \bf k)$} (see e.g. \cite{MV, Ve, Vi}).
\footnotetext{\begin{itemize} \item[ ]{\bf Mathematics Subject
Classification (2010):} Primary 13H15. Secondary 13C15, 13D40,
14C17.  \item[ ]{\bf  Key words and phrases:} Mixed multiplicity,
 Hilbert-Samuel multiplicity,  joint reduction.
\end{itemize}}

It has long been
known that the mixed multiplicity of ideals is an important
invariant of algebraic geometry and commutative algebra.
 In past
years, mixed multiplicities of ideals have attracted much
attention (see e.g. [3$-$10, 13$-$34]).

We turn now to some  facts  of mixed multiplicities  $e(J^{[k_0+1]},
\mathbf{I}^{[\mathbf{k}]}; M)$ related to joint reductions.
 In the case of ideals of dimension
$0,$ Risler and Teissier in 1973 \cite{Te} defined mixed
multiplicities  and interpreted them as Hilbert-Samuel
multiplicities of ideals generated by general elements;
 Rees in 1984 \cite{Re}
built joint reductions and showed  that each mixed multiplicity is
the multiplicity of a joint reduction. And under the viewpoint of
joint reductions, Rees's mixed multiplicity theorem can be viewed as an
extension of the result of  Risler and Teissier.
 Developing this theorem of Rees,  B\"{o}ger extended to the case
 of non-$\frak m$-primary ideals (see \cite{SH}); Swanson proved the converse
of Rees's mixed multiplicity theorem for formally equidimensional rings
\cite{Sw} (see \cite[Theorem 17.6.1]{SH});  Dinh-Thanh-Viet
\cite[Theorem 3.1]{VDT} extended to the case that the ideal $I$ have
height larger than $|\mathbf{k}|.$  But whether there is a similar result for arbitrary
ideals, is not yet known.  This is one of motivations to direct  us towards
the following question.

\noindent{\bf Question 1:} When are mixed multiplicities equal to
the Hilbert-Samuel multiplicity of joint reductions?

 Recall that
 the notion of
joint reductions was built in Rees's work in 1984 \cite{Re}. And
O'Carroll in 1987 \cite{Oc} proved the existence of joint
reductions in the general case.  This concept was studied
in \cite{{SH}, Vi2,  Vi4,  VDT, {VT4}}.

\begin{definition}[{Definition \ref{de01}}]
Let $\frak I_i$ be a sequence  consisting $k_i$ elements of $I_i$
for all $1 \le i \le d.$
 Put  ${\bf x} = \frak I_1, \ldots, \frak
 I_d$  and $(\emptyset) = 0_A$. Then ${\bf x}$ is called a {\it joint
reduction} of $\mathbf I$ with respect to $M$ of the type ${\bf
k}=(k_1,\ldots,k_d)$ if $\mathbb{I}^{\mathbf{n} }M =
\sum_{i=1}^d(\frak I_i) \mathbb{I}^{\mathbf{n} - \mathbf{e}_i}M$
for all large $\bf n.$  If $d=1$ then $(\frak I_1)$
is called a {\it reduction} of $I_1$ with respect to $M$
\cite{NR}.
\end{definition}

 The direction towards Question 1 leads us to the following
 result.
 \begin{theorem}[{Theorem \ref{thm2.19vt}}]\label{thm2.31/1} Let ${\bf x}$ be a joint
reduction of ${\bf I}, J$ with respect to $M$ of the type $({\bf k}, k_0+1)$ with  $k_0 + |{\bf k}| = \dim \overline M-1.$
 Assume that $\dim {M}/I{M} < \dim {M} - |\bf k|.$ Then ${\bf x}$ is a system of parameters for ${M}$ and
 $e(J^{[k_0 +1]}, \mathbf{I}^{[\mathbf{k}]}; M) = e(\mathbf{x};
{M}).$\end{theorem}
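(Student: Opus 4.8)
I would prove the two conclusions together by induction on $\dim\overline M$, the inductive step being the deletion of one well-chosen entry of $\mathbf x$. First two preliminary reductions. Since $|\mathbf k|\ge0$ the hypothesis gives $\dim M/IM<\dim M$, so no $\mathfrak p\in\operatorname{Supp}M$ with $\dim A/\mathfrak p=\dim M$ contains $I$; as $(0_M:I^\infty)_{\mathfrak p}=0$ for such $\mathfrak p$, this yields $\dim\overline M=\dim M$, and hence $\mathbf x$ has exactly $|\mathbf k|+(k_0+1)=\dim M$ entries --- the correct number for a system of parameters. Moreover these top-dimensional primes, and the lengths $\ell(M_{\mathfrak p})$, are the same for $M$ and $\overline M$; since $J^{n_0}\mathbb I^{\mathbf n}M\cong J^{n_0}\mathbb I^{\mathbf n}\overline M$ for all large $\mathbf n$ by Artin--Rees, we get $e(J^{[k_0+1]},\mathbf I^{[\mathbf k]};M)=e(J^{[k_0+1]},\mathbf I^{[\mathbf k]};\overline M)$, and, once $\mathbf x$ is a system of parameters, $e(\mathbf x;M)=e(\mathbf x;\overline M)$ by the associativity formula. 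Thus I may assume $0_M:I^\infty=0$; the system-of-parameters claim for a general $M$ then follows since $0_M:I^\infty$ is killed by a power of $I$ and has dimension $<\dim M-|\mathbf k|\le\#\mathbf x$.

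\emph{The induction.} If $|\mathbf k|=0$, then $\mathbf x=\mathfrak I_0$ is a $(\dim M)$-generated reduction of the $\mathfrak m$-primary ideal $J$ with respect to $M$, hence a system of parameters, and $e(\mathbf x;M)=e(J;M)=e(J^{[k_0+1]},\mathbf I^{[\mathbf k]};M)$ by the classical theorems of Northcott--Rees and Rees. If $|\mathbf k|>0$, I would single out an entry $a$ of $\mathbf x$ lying in some $I_i$ with $k_i\ge1$ that is superficial for the multigraded filtration $\{J^{n_0}\mathbb I^{\mathbf n}M\}$ --- the leading term of a weak-(FC) sequence for $(J,\mathbf I;M)$ --- and that avoids every maximal-dimensional prime of $M$, so $\dim M/aM=\dim M-1$. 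The joint-reduction identity descends to $M/aM$ with no hypothesis on $a$, so $\mathbf x\setminus\{a\}$ is a joint reduction of $I_1,\dots,I_d,J$ with respect to $M/aM$ of type $(\mathbf k-\mathbf e_i,k_0+1)$; also $\dim\overline{M/aM}=\dim\overline M-1$ while the inequality $\dim(M/aM)\big/I(M/aM)<\dim(M/aM)-|\mathbf k-\mathbf e_i|$ is inherited. By the induction hypothesis applied to $M/aM$ one gets that $\mathbf x\setminus\{a\}$ is a system of parameters for $M/aM$ with $e(J^{[k_0+1]},\mathbf I^{[\mathbf k-\mathbf e_i]};M/aM)=e(\mathbf x\setminus\{a\};M/aM)$; combining with the standard reduction formulas $e(J^{[k_0+1]},\mathbf I^{[\mathbf k]};M)=e(J^{[k_0+1]},\mathbf I^{[\mathbf k-\mathbf e_i]};M/aM)$ and $e(\mathbf x;M)=e(\mathbf x\setminus\{a\};M/aM)$, both valid because $a$ is chosen superficial, and observing $M/(\mathbf x)M\cong(M/aM)\big/(\mathbf x\setminus\{a\})(M/aM)$, we obtain both conclusions for $M$.

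\emph{The main obstacle.} The crux is the existence, in the inductive step, of the entry $a$ with all the required good properties: one must show that a joint reduction of precisely the type $(\mathbf k,k_0+1)$ with $k_0+|\mathbf k|=\dim\overline M-1$ is, under the hypothesis $\dim M/IM<\dim M-|\mathbf k|$, non-degenerate enough that some entry in the $I_i$-part can be peeled off as a superficial parameter and the two multiplicity-reduction formulas hold with no error term --- equivalently, that such a joint reduction is realizable as a maximal weak-(FC) sequence. This is exactly where the hypothesis $\dim M/IM<\dim M-|\mathbf k|$ enters in place of the height condition $\htt I>|\mathbf k|$ of \cite[Theorem 3.1]{VDT}: it guarantees that enough of $I$ stays off the top-dimensional primes of $M$ (and, inductively, of its successive quotients), which keeps the descent alive and forces $\mathbf x$ itself to be a system of parameters --- precisely the hypothesis that could thereby be dropped from \cite[Theorem 3.1]{VDT}. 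I would expect the paper to isolate this non-degeneracy statement as a lemma on weak-(FC) sequences preceding the theorem, with the localization observation that $\operatorname{Supp}(M/(\mathbf x)M)\subseteq\{\mathfrak m\}\cup V(I)$ (from $(IJ)^NM\subseteq(\mathbf x)M$) as the first step.
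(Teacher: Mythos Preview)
Your identification of the crux is exactly right, but your expected resolution is not what the paper does, and this is a genuine gap. You hope that under the hypothesis $\dim M/IM<\dim M-|\mathbf k|$ some entry $a$ of the \emph{given} joint reduction $\mathbf x$ is itself a weak-(FC)-element, so that both reduction formulas $e(J^{[k_0+1]},\mathbf I^{[\mathbf k]};M)=e(J^{[k_0+1]},\mathbf I^{[\mathbf k-\mathbf e_i]};M/aM)$ and $e(\mathbf x;M)=e(\mathbf x\setminus\{a\};M/aM)$ hold simultaneously. The paper never proves such a statement, and there is no reason to believe an arbitrary joint reduction is a weak-(FC)-sequence: the weak-(FC) condition (FC1) is a genericity condition that a specific $x_1\in I_i$ need not satisfy.

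The paper's route is different. After proving separately (Lemma~\ref{bd32}) that $\mathbf x$ is a system of parameters, it passes via the associativity formula to $B=A/\mathfrak p$ for each top-dimensional prime $\mathfrak p$ of $M$. There it does \emph{not} peel off $x_1$; instead, using that both ``being weak-(FC)'' and ``completing $x_2,\ldots,x_n$ to a joint reduction'' are non-empty Zariski-open conditions on $I_i/\mathfrak m I_i$, it finds an auxiliary $x\in I_i$ (typically $x\ne x_1$) with both properties. Induction then gives $e(J^{[k_0+1]},\mathbf I^{[\mathbf k]};B)=e(x,x_2,\ldots,x_n;B)$. The remaining work, which your outline avoids, is to show the swap is harmless:
\[
e(x,x_2,\ldots,x_n;B)=e(x_1,x_2,\ldots,x_n;B).
\]
This is the real content of the argument and occupies most of the proof: a base case (Lemma~\ref{le2019}, comparing two joint reductions of type $(\mathbf e_1,1)$ via reductions of $I_1$ on $J^m(C/x_2C)$) plus a four-case analysis depending on $|\mathbf k|$, $k_0$, and $\dim B/IB$, in which one carefully chooses a further element $u\in(x_2,\ldots,x_n)$ avoiding the top primes of $B/IB$ so that the inductive hypothesis applies to $B/uB$. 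Your proposal contains no analogue of this swap argument; without it, the induction does not close.
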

It should be noted that if one omits the assumption
$\dim {M}/I{M} < \dim {M} - |\bf k|,$
then in general, $e(J^{[k_0 +1]}, \mathbf{I}^{[\mathbf{k}]}; M)$ can not
be the Hilbert-Samuel multiplicity of a joint reduction of ${\bf I}, J$ with respect to $M$ of the type $({\bf k}, k_0+1)$ even when this joint reduction is a system of parameters for $M$ (see Remark \ref{rm35}).

\enlargethispage{1.cm}

Theorem \ref{thm2.31/1} not only replaces the condition on the height of the ideal $I$ in the statement of \cite[Theorem 3.1]{VDT}
by a weaker condition on $I$, but also removes the condition that
the joint reduction ${\bf x}$ is a system of parameters for ${M}$ in \cite[Theorem 3.1]{VDT}.
 Theorem \ref{thm2.31/1} seems to make the
problem of expressing mixed multiplicities into the Hilbert-Samuel
multiplicity of joint reductions become closer.

The paper is divided into three sections. Section 2 is devoted to the discussion of some notions and results  used in the paper.
Section 3 proves the main result. And as interesting consequences of the main result, we get Corollary \ref {co4.0a} that is a stronger result than \cite[Theorem 3.1]{VDT}, and Corollary \ref{thm2.19v} which shows that mixed multiplicities are
the Hilbert-Samuel
multiplicity of weak-(FC)-sequences.

\section{Mixed multiplicities and some relative sequences}
In this section, we recall  notions of weak-(FC)-sequences and
joint reductions, and give some facts on the relationship between
mixed multiplicities and these sequences.

Let $(A, \frak{m})$ be a Noetherian local ring  with maximal ideal
$\mathfrak{m}$ and infinite residue field $k = A/\mathfrak{m}.$  Let $M$
be a finitely generated $A$-module.   Let $J$ be an $\frak
m$-primary ideal, $ I_1,\ldots, I_d$ be ideals of $A.$
 Set   $I
= I_1\cdots I_d;$ $\overline {M}= M/0_M: I^\infty;$ $q=\dim \overline {M}$ and
\begin{align*}
&\mathbf{e}_i = (0, \ldots,  \stackrel{(i)}{1},  \ldots, 0); {\bf n} =(n_1,\ldots,n_d); {\bf k} =(k_1,\ldots,k_d); {\bf
0}=(0,\ldots,0)\in  \mathbb{N}^{d}; \\
& |{\bf k}| = k_1 + \cdots + k_d;  \mathrm{\bf I}= I_1,\ldots,I_d;
 \mathbb{I}^{\mathrm{\bf n}}= I_1^{n_1}\cdots I_d^{n_d}; \mathrm{\bf I}^{[\mathrm{\bf
k}]}= I_1^{[k_1]},
  \ldots,I_d^{[k_d]}.
\end{align*}
 And let $I
\nsubseteq \sqrt{\mathrm{Ann}M}.$
Denote by $P(n_0, {\bf n}, J, \mathbf{I}, M)$ the Hilbert polynomial of the function
$\ell\Big(\dfrac{J^{n_0}\mathbb{I}^{\bf
n}M}{J^{n_0+1}\mathbb{I}^{\bf n}M}\Big).$
Then
 recall that $P(n_0, {\bf n}, J,  \mathbf{I}, M)$
 is a  polynomial of total
degree $q-1$ by \cite[Proposition
3.1]{Vi} (see \cite{MV}).
 If one writes the terms of total degree $q-1$ of
$P(n_0, {\bf n}, J,  \mathbf{I}, M)$ in the form $\sum_{k_0 + |\mathbf{k}| = q - 1}
e(J^{[k_0+1]},\mathrm{\bf I}^{[\mathrm{\bf k}]};
M)\frac{n_0^{k_0}\mathbf{n}^\mathbf{k}}{k_0!\mathbf{k}!},$  then
 $e(J^{[k_0+1]},
\mathbf{I}^{[\mathbf{k}]}; M)$ is called   the {\it mixed
multiplicity of $M$ with respect to $J, \bf I$ of the type
$(k_0+1, \bf k)$} (see e.g. \cite{MV, Ve, Vi}).

 Denote by
$\bigtriangleup^{(h_0,\mathbf{h})}Q(n_0, \bf n)$ the $(h_0,\bf
h)$-difference of the polynomial $Q(n_0,\bf n).$ And throughout the paper, we put
$e(J^{[k_0+1]},
\mathbf{I}^{[\mathbf{k}]}; M) =0 $ if $k_0 + |{\bf k}| > q-1.$

\begin{remark}\label{re2.00} Let $e(J^{[k_0+1]},
\mathbf{I}^{[\mathbf{k}]}; M)$  be  the mixed
multiplicity of $M$ with respect to ideals $J,\mathrm{\bf I}$ of
the type $(k_0+1,\mathrm{\bf k}).$ Then $e(J^{[k_0+1]},
\mathbf{I}^{[\mathbf{k}]}; M) = \bigtriangleup^{(k_0,\mathrm{\bf
k})}P(n_0, {\bf n}, J, \mathbf{I}, M).$
\end{remark}

The concept of joint reductions of $\mathfrak{m}$-primary ideals
was given by Rees \cite{Re} in 1984. This concept was extended to
the set of arbitrary ideals by \cite{Oc,{SH}, Vi2,  Vi4,  VDT,
{VT4}}.

\begin{definition} \label{de01}
Let $\frak I_i$ be a sequence  consisting $k_i$ elements of $I_i$
for all $1 \le i \le d$ and $k_1,\ldots,k_d \ge 0.$
 Put  ${\bf x} = \frak I_1, \ldots, \frak
 I_d$  and $(\emptyset) = 0_A$. Then ${\bf x}$ is called a {\it joint
reduction} of $\bf I$ with respect to $M$ of the type ${\bf k}
=(k_1,\ldots,k_d)$ if $\mathbb{I}^{\mathbf{n} }M =
\sum_{i=1}^d(\frak I_i) \mathbb{I}^{\mathbf{n} - \mathbf{e}_i}M$
for all large $\bf n.$  If $d=1$ then $(\frak I_1)$
is called a {\it reduction} of $I_1$ with respect to $M$
\cite{NR}.
\end{definition}

\enlargethispage{1.cm}

The weak-(FC)-sequence, which was defined in \cite{Vi}, is a kind
of superficial sequences,  and it is proven  to be useful in
several contexts (see e.g. \cite{DMT, DV, DQV, VT, VT4}).

\begin{definition}[\cite{Vi}] \label{de011} Set $I
= I_1\cdots I_d.$   An element $x  \in I_i$ $(1 \leqslant i \leqslant d)$ is called a
{\it weak}-(FC)-{\it element} of $\bf I$ with respect to $M$ if
the following conditions are satisfied:
 \begin{enumerate}[(FC1):]
 \item $x{M}\bigcap {\mathbb I}^{\mathbf{n}}{M}
= x {\mathbb I}^{\mathbf{n}-\mathbf{e}_i}{M}$ for all $n_i \gg 0$
 and all $n_1,\ldots,n_{i-1},n_{i+1},
\ldots, n_d \geq 0$.

\item $x$ is an $I$-filter-regular element with respect to $M,$
i.e.,\;$0_M:x \subseteq 0_M: I^{\infty}.$
 \end{enumerate}

Let $x_1, \ldots, x_t$ be elements of $A$. For any $0\leqslant i
\leqslant t,$ set\;      $M_i = {M}\big/{(x_1, \ldots, x_{i})M}$.
Then $x_1, \ldots, x_t$ is called  a {\it weak}-(FC)-{\it
sequence}   of $\mathbf{I}$ with respect to $M$ if $x_{i + 1}$ is
a weak-(FC)-element  of $\mathbf{I}$ with respect to  $M_i$ for
all $0 \leqslant i \leqslant t - 1$. If a weak-(FC)-sequence
consists of $k_i$ elements of $I_i$ $(1 \le i \le d),$ then it is
called  a weak-(FC)-sequence of the type $(k_1,\ldots,k_d).$
\end{definition}

The following remark recalls some  important properties  of weak-(FC)-sequences.
\begin{remark}\label{note12} Let $\frak I_i$ be a sequence  of  elements of $I_i$
for all $1 \le i \le d$.
Assume that $\frak I_1, \ldots, \frak
 I_d$ is a weak-(FC)-sequence  of $\mathbf{I}$ with respect
to $M$. Then
\begin{equation}\label{vttt1}(\frak I_1, \ldots, \frak
 I_d)M \bigcap \mathbb{I}^{\mathbf{n}}{M}
=  \sum_{i=1}^d(\frak I_i)
\mathbb{I}^{\mathbf{n} - \mathbf{e}_i}M \end{equation} for all large $\bf n$ by \cite[Theorem
 3.4 (i)]{Vi4}. And if
 $x \in I_i$  $(1 \le i \le d)$ is  a weak-{\rm (FC)}-element  of
$\mathbf{I}, J$ with respect to $M,$ then
\begin{equation}\label{vttt2}P(n_0, {\bf n}, J, \mathbf{I}, M/xM)= P(n_0, {\bf n}, J, \mathbf{I}, M)-P(n_0, {\bf n}-\mathbf{e}_i, J, \mathbf{I}, M) \end{equation} by \cite [(3)]{DV} (or the proof of \cite[Proposition 3.3 (i)]{MV}).
\end{remark}

 The role of
weak-(FC) sequences in studying mixed multiplicities is showed by
the following note which will be used as a tool in this paper.
\begin{remark}\label{re2.6aa} Set $J= I_0.$ Let
 $x \in I_i$  $(0 \le i \le d)$ be  a weak-{\rm (FC)}-element  of
$\mathbf{I}, J$ with respect to $M$. Then by (\ref{vttt2}), we get  $\dim M/xM:I^\infty \leqslant \dim M/0_M:I^\infty -1$ and
\begin{enumerate}[{\rm (i)}] \item
$P(n_0, {\bf n}, J, \mathbf{I}, M/xM)=\begin{cases}
\bigtriangleup^{(0,\mathrm{\bf e}_i)}P(n_0, {\bf n}, J,
\mathbf{I}, M) \quad \text{ if }\; 1 \le i \le d\\
\bigtriangleup^{(1,\mathrm{\bf 0})}P(n_0, {\bf n}, J, \mathbf{I},
M) \quad \;\text{ if }\; i = 0.
\end{cases} $
\item   If
    $k_i > 0$ for
certain $0 \le i \le d,$ then
 by Remark \ref{re2.00} and (i) we obtain
$$e(J^{[k_0+1]}, \mathbf{I}^{[\mathbf{k}]}; M)  =\begin{cases}
e(J^{[k_0+1]}, \mathbf{I}^{[\mathrm{\bf k} - \mathbf{e}_i]};
 M\big/xM) \;\text{ if }\; 1 \le i \le d\\
e(J^{[(k_0-1)+1]}, \mathbf{I}^{[\mathrm{\bf k}]}; M\big/xM)
\text{ if } \; i = 0.
\end{cases} $$
\item  For any $\mathbf{k} \in \mathbb{N}^{d}$, by
\cite[Proposition 2.3]{VDT} (see \cite[Remark 1]{Vi}), there
exists a weak-$\mathrm{(FC)}$-sequence of $\mathbf{I}$ with
respect to $M$  of the type $\mathbf{k}.$

\item Let  $\mathbf{y}$ be a weak-(FC)-sequence  of $\mathbf{I}, J$ with respect to
 $M.$ Then by (\ref{vttt1}) in Remark \ref{note12}, it follows that $\mathbf{y}$ is a joint reduction of
$\mathbf{I}, J$ with respect to $M$ if and only if $J^{n_0}\mathbb{I}^{\mathbf{n}}(M/(\mathbf{y})M) = 0$ for all large
 $n_0, \mathbf{n}.$
\item  Let $|{\bf
k}|+k_0 = \dim \overline{M}-1.$  Then $\deg P(n_0, {\bf
n}, J, \mathbf{I}, M) = |{\bf
k}|+k_0 $   by
\cite[Proposition 3.1]{Vi} (see \cite{MV}). So $\bigtriangleup^{(k_0+1, \mathrm{\bf k})}P(n_0, {\bf
n}, J, \mathbf{I}, M) = 0.$
      Now if
${\bf x}$ is a weak-{\rm (FC)}-sequence   of $\mathbf{I}, J$ with
respect to $M$ of the type $(\mathrm{\bf k}, k_0+1),$
    then by (i)  we obtain
   $P(n_0, {\bf n}, J, \mathbf{I},
M/({\bf x})M)=\bigtriangleup^{(k_0+1, \mathrm{\bf k})}P(n_0, {\bf
n}, J, \mathbf{I}, M) = 0.$ Hence by (iv), ${\bf x}$ is a joint
reduction of $\mathbf{I}, J$ with respect to $M$ of the type
$(\mathrm{\bf k}, k_0+1).$
\end{enumerate}
\end{remark}

In order to prove the results of this paper, we need the
following facts.

Recall that an ideal $\frak{a}$ of $A$ is called an {\it ideal of definition} of $M$ if $\ell_A(M/\frak{a}M)<\infty,$ and a sequence ${\bf y} = y_1, \ldots, y_n$ of elements
in $ \frak m$ is called a {\it multiplicity system} of $M$ if
$(\bf y)$ is an ideal of definition of $M$ (see e.g. \cite[Page 192]{BH1}).
Let ${\bf y}$ be a
multiplicity system. Then one defines the {\it multiplicity symbol}
of ${\bf y}$ as follows: if $n=0$, then $\ell_A(M) <\infty$, and
set $e(\mathbf{y};M) = \ell_A(M)$. If $n> 0$, set $$e(\mathbf{y};
M) = e(\mathbf{y}'; M\big/y_1M) - e(\mathbf{y}'; 0_{M}:y_1)$$ (see e.g. \cite[Definition 4.7.3]{BH1}).
 It
is well known that $e(\mathbf{y}; M) \not=0$ if and only if   $\bf
y$ is a system of parameters for $M$, and in this case,
$e(\mathbf{y};  M) = e((\mathbf{y});  M)$ is the Hilbert-Samuel
multiplicity of the ideal $(\bf y)$ with respect to $M$ (see e.g.
\cite{BH1}). And  $I
= I_1\cdots I_d;$ $\overline {M}= M/0_M: I^\infty;$ $q=\dim \overline {M}.$

\begin{lemma}\label{no4.3a} Let ${\bf x}$ be a finite sequence of  elements
in $ \frak m.$  Then we have:
\begin{enumerate}[{\rm (i)}]
\item $(\mathbf{x})$ is an ideal of definition of $I^mM$ for all
large enough  $m$
 if and only if $(\mathbf{x})$ is an ideal of
definition of $\overline {M}$. \item   If $\mathbf{x}$ is a
multiplicity system of $\overline{M},$ then $ e(\mathbf{x}; I^mM)
= e(\mathbf{x}; \overline {M})$ for all large enough $m.$
 \item Let $\bf x$ be a joint
reduction of  $\mathbf{I},J$ with respect to $M$ of the type $(
{\bf 0}, k_0+1).$ Then $(\mathbf{x})$ is an ideal of definition of
$\overline {M}$  and  $e(J^{[k_0+1]}, \mathbf{I}^{[\mathbf{0}]};
M) =e(\mathbf{x}; I^mM) = e(\mathbf{x}; \overline{M})$  for all
large enough $m.$  Moreover, $\dim \overline{M} \le k_0+1,$ and
equality holds if only if $e(J^{[k_0+1]},
\mathbf{I}^{[\mathbf{0}]}; M) \not=0,$ and in this case, ${\bf x}$
is a system of parameters for $ \overline {M}.$
\end{enumerate}
\end{lemma}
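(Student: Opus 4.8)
The plan is to set up two structural observations and then dispatch the three parts. Put $N=0_M:I^\infty$; since $A$ is Noetherian, $N=0_M:I^c$ for some $c$, so $I^cN=0$, and Artin--Rees gives $I^mM\cap N=0$ for $m\gg0$. Since the image of $I^mM$ in $\overline{M}=M/N$ is $I^m\overline{M}$, this yields a natural isomorphism $I^mM\cong I^m\overline{M}$ for $m$ large, and the same argument gives $\mathbb{I}^{\mathbf{n}}M\cong\mathbb{I}^{\mathbf{n}}\overline{M}$ for $\mathbf{n}$ large. Secondly, because $0_{\overline{M}}:I^\infty=0$, at any $P\in\operatorname{Supp}\overline{M}$ the ideal $I_P$ is contained in no associated prime of $\overline{M}_P$ (else a nonzero element of $\overline{M}_P$ is annihilated by $I_P$), hence contains a nonzerodivisor on $\overline{M}_P$; so $(I^m\overline{M})_P\neq0$ and $\operatorname{Supp}(I^m\overline{M})=\operatorname{Supp}\overline{M}$ for all $m\geq1$. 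Consequently $\dim I^mM=q$ for $m$ large, and, using $I^{\max_i n_i}\overline{M}\subseteq\mathbb{I}^{\mathbf{n}}\overline{M}\subseteq I^{\min_i n_i}\overline{M}$, also $\dim\mathbb{I}^{\mathbf{n}}M=q$ for $\mathbf{n}$ large; moreover $\operatorname{Supp}(\overline{M}/\mathbb{I}^{\mathbf{n}}\overline{M})=\operatorname{Supp}\overline{M}\cap V(I)$ has dimension $<q$, the top-dimensional minimal primes of $\overline{M}$ being associated and hence not containing $I$.

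Part (i) is then immediate: for $m\gg0$, $I^mM/(\mathbf{x})I^mM\cong I^m\overline{M}/(\mathbf{x})I^m\overline{M}$ has support $\operatorname{Supp}(I^m\overline{M})\cap V((\mathbf{x}))=\operatorname{Supp}\overline{M}\cap V((\mathbf{x}))=\operatorname{Supp}(\overline{M}/(\mathbf{x})\overline{M})$, so $\dim I^mM/(\mathbf{x})I^mM=\dim\overline{M}/(\mathbf{x})\overline{M}$ and one side has finite length iff the other does. For (ii), let $n$ be the number of elements of $\mathbf{x}$; since $(\mathbf{x})$ is an ideal of definition of $\overline{M}$, $n\geq q$. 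Applying additivity of the multiplicity symbol (\cite[\S4.7]{BH1}) to $0\to I^m\overline{M}\to\overline{M}\to\overline{M}/I^m\overline{M}\to0$ and using $\dim(\overline{M}/I^m\overline{M})<q\leq n$, the term $e(\mathbf{x};\overline{M}/I^m\overline{M})$ vanishes, so $e(\mathbf{x};\overline{M})=e(\mathbf{x};I^m\overline{M})=e(\mathbf{x};I^mM)$ for $m\gg0$.

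For (iii), $\mathbf{x}=\mathfrak{J}$ consists of $k_0+1$ elements of $J$, and the hypothesis reads $J^{n_0}\mathbb{I}^{\mathbf{n}}M=(\mathbf{x})J^{n_0-1}\mathbb{I}^{\mathbf{n}}M$ for all large $n_0,\mathbf{n}$. Setting $\mathbf{n}=(m,\dots,m)$ with $m\gg0$ and using $\mathbb{I}^{(m,\dots,m)}=I^m$, this says $(\mathbf{x})$ is a reduction of the $\mathfrak{m}$-primary ideal $J$ with respect to $I^mM$; as $I^mM\neq0$, $(\mathbf{x})$ is an ideal of definition of $I^mM$, hence of $\overline{M}$ by (i), and so $q\leq k_0+1$. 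The key remaining point is to identify the ``pure-$J$'' mixed multiplicity with an ordinary one: for fixed large $\mathbf{n}$, the polynomial $n_0\mapsto P(n_0,\mathbf{n},J,\mathbf{I},M)$ agrees for $n_0\gg0$ with the Hilbert--Samuel polynomial of $\mathbb{I}^{\mathbf{n}}M$ along $J$, so equating the coefficients of $n_0^{q-1}$ (which, on the $P$ side, is $e(J^{[q]},\mathbf{I}^{[\mathbf{0}]};M)/(q-1)!$ by Remark~\ref{re2.00}) gives $e(J^{[q]},\mathbf{I}^{[\mathbf{0}]};M)=e(J;\mathbb{I}^{\mathbf{n}}M)$; and since $\mathbb{I}^{\mathbf{n}}M\cong\mathbb{I}^{\mathbf{n}}\overline{M}\hookrightarrow\overline{M}$ with cokernel of dimension $<q$, additivity of $e(J;-)$ gives $e(J;\mathbb{I}^{\mathbf{n}}M)=e(J;\overline{M})=e(J;I^mM)$. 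Now distinguish cases. If $k_0+1>q$: then $e(J^{[k_0+1]},\mathbf{I}^{[\mathbf{0}]};M)=0$ by the standing convention, while $e(\mathbf{x};I^mM)=e(\mathbf{x};\overline{M})=0$ because $\mathbf{x}$ has more than $\dim\overline{M}=\dim I^mM$ elements, so all the claimed equalities hold. If $k_0+1=q$: then $(\mathbf{x})$ is an ideal of definition of $\overline{M}$ (and of $I^mM$) generated by exactly $\dim$ elements, hence a system of parameters, so $e(\mathbf{x};I^mM)=e((\mathbf{x});I^mM)=e(J;I^mM)=e(J^{[k_0+1]},\mathbf{I}^{[\mathbf{0}]};M)$, the middle equality because a reduction preserves the Hilbert--Samuel multiplicity (\cite{SH}); with (ii) this gives the full chain. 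Finally $e(J^{[k_0+1]},\mathbf{I}^{[\mathbf{0}]};M)=e(J;\overline{M})$ is nonzero precisely when $k_0+1=q$ (as $J$ is $\mathfrak{m}$-primary and $\overline{M}\neq0$), and then $\mathbf{x}$ is a system of parameters for $\overline{M}$ by the above.

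The only step that goes beyond routine bookkeeping with supports, dimensions and additivity of multiplicities is the identification $e(J^{[q]},\mathbf{I}^{[\mathbf{0}]};M)=e(J;\overline{M})$: one must be careful which Hilbert polynomial is which, and the argument is to read off the leading coefficient in $n_0$ of $P(n_0,\mathbf{n},J,\mathbf{I},M)$ at fixed large $\mathbf{n}$ and match it against the Hilbert--Samuel multiplicity of $\mathbb{I}^{\mathbf{n}}M$ with respect to $J$.
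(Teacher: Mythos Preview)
Your proof is correct and follows essentially the same route as the paper's: parts (i) and (ii) are identical in spirit (equality of supports/radicals, the Artin--Rees isomorphism $I^mM\cong I^m\overline{M}$, and additivity of the multiplicity symbol on $0\to I^m\overline{M}\to\overline{M}\to\overline{M}/I^m\overline{M}\to0$ with a cokernel of small dimension). The one place where you do more than the paper is in (iii): where the paper simply invokes \cite[Lemma~3.2(i)]{Vi} to obtain $e(J^{[k_0+1]},\mathbf{I}^{[\mathbf{0}]};M)=e(\mathbf{x};I^mM)$, you unpack this step by matching the $n_0^{q-1}$-coefficient of $P(n_0,\mathbf{n},J,\mathbf{I},M)$ at fixed large $\mathbf{n}$ with $e(J;\mathbb{I}^{\mathbf{n}}M)$, transferring to $e(J;\overline{M})$ by additivity, and then handling the two cases $k_0+1>q$ (where both sides vanish) and $k_0+1=q$ (where $\mathbf{x}$ is a parameter system and $(\mathbf{x})$ a reduction of $J$). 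This makes your argument self-contained, but it is the same proof in substance.
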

\begin{proof}
Because  $\sqrt{\mathrm{Ann}[\overline
{M}\big/(\mathbf{x})\overline {M}}]=
\sqrt{\mathrm{Ann}[I^mM\big/(\mathbf{x})I^mM]}$ for all large
enough  $m$, we have (i). By Artin-Rees Lemma, it implies that  $I^mM \cap (0_M: I^\infty) = 0$ for all large enough $m$. Hence
 $I^m\overline {M} \cong
 I^mM$
 for all large enough $m$. From this it follows that $
e(\mathbf{x}; I^mM) = e(\mathbf{x}; I^m\overline {M})$ for all
large enough $m.$
 Therefore  we get (ii) since
 $\dim (\overline {M}\big/I^m\overline {M}) < \dim
\overline {M}.$ The proof of (iii):  Since $\bf x$ is a joint
reduction of the type $( {\bf 0}, k_0+1),$ $(\bf x)$ is a
reduction of $J$ with respect to $I^mM$ for large enough $m.$ Thus
${\bf x}$ is a multiplicity system  of $I^mM$ for large enough $m$
since $J$ is $\frak m$-primary, and so of $\overline {M}$ by (i).
Hence $\dim \overline{M} \le |{\bf x}|= k_0+1.$  By \cite[Lemma
3.2 (i)]{Vi} and (ii) we get  $e(J^{[k_0+1]},
\mathbf{I}^{[\mathbf{0}]}; M) =e(\mathbf{x}; I^mM) = e(\mathbf{x};
\overline{M})$. And thus $\dim \overline{M} =  k_0+1$ if and only
if $e(J^{[k_0+1]}, \mathbf{I}^{[\mathbf{0}]}; M)\not=0,$ in this
case, ${\bf x}$ is a system of parameters for $ \overline {M}.$
 \end{proof}

 \section{Mixed multiplicities and multiplicities of joint reductions}

This section proves Theorem \ref {thm2.19vt} which is the main theorem of the
paper. And as immediate consequences of this theorem, we obtain Corollary \ref {co4.0a} which is a stronger result than \cite[Theorem 3.1]{VDT}, and
Corollary \ref{thm2.19v} that interpreted mixed multiplicities as Hilbert-Samuel
multiplicities of weak-{\rm (FC)}-sequences.

To prove the main theorem, we need the following lemmas.

\begin{lemma}\label{le2019} Let $x, x_1\in I_1$ and $x_2\in J.$ Let $\dim M/IM < \dim M.$ Assume that $x,x_2$ and $x_1,x_2$ are systems of parameters for ${M}$ and are joint
reductions of ${\bf I}, J$ with respect to $M.$   Then
$$e(x,x_2;M)=e(x_1,x_2;M).$$
\end{lemma}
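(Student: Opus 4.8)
The plan is to reduce both sides to a common intermediate quantity by passing through the module $M/x_2M$. Since $x_2 \in J$ and $J$ is $\mathfrak m$-primary, $x_2$ is automatically an $I$-filter-regular element, and one checks it is a weak-(FC)-element of $\mathbf{I}, J$ with respect to $M$ (this is where the joint-reduction hypothesis on the pairs $\{x,x_2\}$ and $\{x_1,x_2\}$, together with Remark \ref{re2.6aa}(iv) and the fact that each pair is a system of parameters, pins down the degree behaviour). Because $x, x_2$ is a system of parameters for $M$ and $x_2$ is a nonzerodivisor up to the $I^\infty$-torsion, we have $e(x,x_2;M) = e(x; M/x_2M) - e(x; 0_M:x_2)$ by the multiplicity-symbol recursion, and similarly for $x_1, x_2$. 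The first guiding idea is that the two error terms $e(x; 0_M:x_2)$ and $e(x_1; 0_M:x_2)$ should vanish: $0_M:x_2 \subseteq 0_M:I^\infty$ has dimension strictly less than $\dim M$, and in fact less than $\dim M - 1$ once one uses the filter-regularity of $x_2$ together with $\dim M/IM < \dim M$, so that a single further generic element kills it dimension-wise; hence those multiplicity symbols are zero.

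Next I would show that $x$ and $x_1$ are each reductions of $I_1$ with respect to the module $\overline{M/x_2M}$ in the sense relevant to Lemma \ref{no4.3a}. Concretely: since $x, x_2$ is a joint reduction of $\mathbf{I}, J$ of type $(\mathbf{e}_1, 1)$ (one element from $I_1$, one from $J$), Remark \ref{re2.6aa}(iv) applied after quotienting out $x_2$ shows that $(x)$ becomes a reduction of $I_1$ with respect to $I^m(M/x_2M)$ for large $m$, and likewise for $x_1$; equivalently, by Lemma \ref{no4.3a}(iii) (with $d=1$, $k_0=0$ after relabeling, and using $J$ there replaced by the ideal $I_1$ and the role of the $\mathfrak m$-primary ideal played by nothing — instead I invoke the classical Rees–Nagata fact that two reductions of the same ideal have equal multiplicity), we get
\begin{equation*}
e(x; M/x_2M) = e\bigl(x; I_1^m(M/x_2M)\bigr) = e\bigl(x_1; I_1^m(M/x_2M)\bigr) = e(x_1; M/x_2M),
\end{equation*}
the middle equality being the statement that two reductions of $I_1$ with respect to the fixed module $I_1^m(M/x_2M)$ yield the same Hilbert–Samuel multiplicity (Rees's theorem \cite[Theorem 2.4]{Re}, which the introduction tells us is subsumed here). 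Combining this chain with the two recursion formulas and the vanishing of the torsion terms gives $e(x,x_2;M) = e(x_1,x_2;M)$.

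The main obstacle I anticipate is the bookkeeping needed to guarantee that $x$ and $x_1$ really are reductions of $I_1$ \emph{with respect to the same module} after killing $x_2$: being a joint reduction of $\mathbf{I}, J$ with respect to $M$ controls $\mathbb{I}^{\mathbf n} J^{n_0} M$, and I must translate this, via $x_2$ being a weak-(FC)-element and the exact-sequence behaviour in (\ref{vttt1})–(\ref{vttt2}), into a reduction statement for $I_1$ on $M/x_2M$ (or on $I^m(M/x_2M)$). The hypothesis $\dim M/IM < \dim M$ is what prevents degeneration — it ensures $\dim \overline{M} = \dim M$ so that no dimension is lost spuriously, and it is exactly what forces the auxiliary torsion modules to have dimension $\le \dim M - 2$. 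A secondary subtlety is checking that $x$ and $x_1$ remain systems of parameters (equivalently, part of a multiplicity system) for $M/x_2M$ and hence that the multiplicity symbols on the right-hand sides are the honest Hilbert–Samuel multiplicities to which Rees's theorem applies; this follows from the given hypothesis that $\{x,x_2\}$ and $\{x_1,x_2\}$ are systems of parameters for $M$ together with $x_2$ being filter-regular.
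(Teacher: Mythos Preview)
Your plan has a genuine gap at the step where you dispose of the error terms $e(x;0_M:x_2)$ and $e(x_1;0_M:x_2)$. First, the assertion that ``$x_2\in J$ with $J$ $\mathfrak m$-primary'' makes $x_2$ automatically $I$-filter-regular is false: filter-regularity means $0_M:x_2\subseteq 0_M:I^\infty$, and membership in an $\mathfrak m$-primary ideal gives no such inclusion. Second, even granting that $x_2$ is part of a system of parameters for $M$ (so $\dim(0_M:x_2)\le \dim M-1=1$), nothing forces $\dim(0_M:x_2)\le 0$; if $\dim(0_M:x_2)=1$ then $e(x;0_M:x_2)>0$, and you have no mechanism to show $e(x;0_M:x_2)=e(x_1;0_M:x_2)$, since $x$ and $x_1$ are only related through $I_1$ on the $I$-saturated part of $M$, not on its torsion. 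The hypothesis $\dim M/IM<\dim M$ only tells you $\dim \overline M=\dim M$; it does \emph{not} bound $\dim(0_M:I^\infty)$ below $\dim M-1$.

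The paper avoids this obstacle entirely by a different route: it first applies the additivity formula to reduce to $B=A/\mathfrak p$ for $\mathfrak p\in\Pi=\{\mathfrak p\in\mathrm{Min}\,M:\dim A/\mathfrak p=\dim M\}$. On the domain $B$ the element $x_2$ is a genuine nonzerodivisor (since $x_2\notin\mathfrak p$, being part of a system of parameters), so the recursion $e(x,x_2;B)=e(x;B/x_2B)$ holds with no error term. It then passes further to $C=(I_2\cdots I_d)^mB$ (not $I_1^m$ or $I^m$ as you wrote) and finally to $J^m(C/x_2C)$, because that is the module on which $(x)$ and $(x_1)$ are honestly reductions of $I_1$; the return from $J^m(C/x_2C)$ to $C/x_2C$ uses that $J$ is $\mathfrak m$-primary so the quotient has dimension $0$. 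Your identification of the target module for the reduction step is too loose and would need the same correction. In short, the missing idea is the reduction to a domain via associativity, which is what makes $x_2$ a nonzerodivisor and kills the torsion error you could not otherwise control.
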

\begin{proof}  Set $$\Pi = \{\frak p \in \mathrm{Min}{M}\mid \dim A/\frak p = \dim
{M}\}.$$  Let $\mathfrak{p} \in \Pi,$ set $B =
A/\mathfrak{p}.$
Since $x_1, x_2$ and $x, x_2$
are both joint reductions of $\mathbf{I}, J$ with respect to $M$ and are systems of parameters for ${M}$,
 $x_1, x_2$ and $x, x_2$
are both joint reductions of $\mathbf{I}, J$ with respect to $B$, and are systems of parameters for ${B}$ (see e.g \cite[Lemma
17.1.4]{SH}). Hence
there exists $m  \gg 0$ such that $(x_1)$ and $(x)$ are both
reductions of $I_1$ with respect to $J^m(C/x_2C),$ here $ C
=\begin{cases} B \;\;\;\;\;\;\;\;\;\;\qquad\text{ if }
  d =1\\
(I_2 \cdots I_d)^mB \;\text{ if }\;  d >1.\\
\end{cases} $ Since $\dim M/IM < \dim M$, $I =
I_1\cdots I_d \nsubseteq \frak p$ and so $\dim B/C < \dim B.$ Hence
since $x_1, x_2$ and $x, x_2$ are  systems of parameters for $B,$
it can be verified that $x_1, x_2$ and $x, x_2$ are  systems of
parameters for  $C.$ Since $J$ is $\frak m$-primary, $$\dim
(C/x_2C)/J^m(C/x_2C) = 0.$$
From this it follows that $x_1$ and $x$ are
systems of parameters for $J^m(C/x_2C).$ Moreover, since $(x_1)$ and $(x)$
are reductions of $I_1$ with respect to $J^m(C/x_2C),$
 by \cite[Theorem
1]{NR}, we get
  $e(x;J^m(C/x_2C)) =
e(I_1;J^m(C/x_2C)) = e(x_1; J^m(C/x_2C)).$   Therefore we obtain   $e(x;J^m(C/x_2C)) = e(x_1;
J^m(C/x_2C)).$ On the other hand since $\dim
(C/x_2C)/J^m(C/x_2C) = 0,$ we get  $e(x; C/x_2C)= e(x;
J^m(C/x_2C))$ and $e(x_1; C/x_2C)= e(x_1; J^m(C/x_2C))$. So $e(x;
C/x_2C)=e(x_1; C/x_2C). $ Remember that  $x_1, x_2$ is  a system of
parameters for  $M$. Hence $x_2\notin \frak{q}$ for all $\frak{q}\in \Pi$. Consequently
 $x_2$ is not  a  zero divisor of $C$. From this it follows that $e(x; C/x_2C) =
e(x,x_2;C)$ and $$e(x_1; C/x_2C) =e(x_1,x_2;C)$$ by \cite[Page 641, lines 27-28, (D)]{AB}.
Therefore we have $e(x,x_2;C)=e(x_1,x_2;C).$ And since $\dim B/C < \dim B,$
$e(x,x_2;B)= e(x,x_2;C)$ and $e(x_1,x_2;C)=e(x_1,x_2;B).$
Hence we obtain $$e(x,x_2;B)= e(x_1,x_2;B).$$
Thus for any $\mathfrak{p} \in \Pi,$
$$e(x,x_2; A/\mathfrak{p})
= e(x_1,x_2;A/\mathfrak{p}).$$ Consequently, we get
$$\sum_{\mathfrak{p}\in \Pi}
\ell_A(M_{\mathfrak{p}})e(x,x_2; A/\mathfrak{p}) = \sum_{\mathfrak{p}\in
\Pi}\ell_A(M_{\mathfrak{p}}) e(x_1,x_2; A/\mathfrak{p}).$$
On the other hand,  $e(x,x_2; {M})
=\sum_{\mathfrak{p}\in \Pi}\ell_A(M_{\mathfrak{p}})e(x,x_2;
A/\mathfrak{p})$  and
$$e(x_1,x_2; M)
=\sum_{\mathfrak{p}\in
\Pi}\ell_A(M_{\mathfrak{p}})e(x_1,x_2; A/\mathfrak{p})$$ (see e.g. \cite[Theorem 11.2.4]{SH}).
Therefore
$e(x,x_2; M)= e(x_1,x_2;{M}).$
\end{proof}
\begin{lemma}\label{bd32} Let ${\bf x}$ be a joint
reduction of ${\bf I}, J$ with respect to $M$ of the type $({\bf k}, k_0+1)$ with  $k_0 + |{\bf k}| = \dim \overline M-1.$
 Assume that $\dim {M}/I{M} < \dim {M} - |\bf k|.$ Then ${\bf x}$ is a system of parameters for ${M}$.
\end{lemma}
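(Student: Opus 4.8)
The plan is to induct on $|\mathbf{k}|$, using the weak-(FC) machinery to peel off the ``$\mathbf{I}$-part'' of $\mathbf{x}$ one element at a time, reducing eventually to the base case $|\mathbf{k}| = 0$, which is exactly Lemma \ref{no4.3a}(iii). First I would dispose of the trivial observation that the hypothesis $\dim M/IM < \dim M - |\mathbf{k}|$ forces $\dim M/IM < \dim M$, hence $I \nsubseteq \sqrt{\mathrm{Ann}\,M}$ and $\overline{M} \neq 0$, and moreover that $\dim \overline{M} = \dim M$ (since any minimal prime of $M$ realizing the dimension cannot contain $I$); thus the equality $k_0 + |\mathbf{k}| = \dim\overline{M} - 1$ reads $k_0 + |\mathbf{k}| = \dim M - 1$, so $\mathbf{x}$ has exactly $\dim M$ elements and being a system of parameters for $M$ is equivalent to $(\mathbf{x})$ being an ideal of definition of $M$, i.e. $\dim M/(\mathbf{x})M = 0$.

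For the inductive step, assume $|\mathbf{k}| > 0$, so some component $k_i \geq 1$; without loss of generality $k_1 \geq 1$, and write $\mathbf{x} = x, \mathbf{x}'$ with $x \in I_1$ the first element of the $\frak I_1$-block. The subtlety is that an arbitrary element of a joint reduction need not be a weak-(FC)-element, so I would instead argue as follows: by Remark \ref{re2.6aa}(iii) choose a weak-(FC)-element $y \in I_1$ of $\mathbf{I}, J$ with respect to $M$, and by Remark \ref{re2.6aa}(v) (applied after stripping off $y$) a full weak-(FC)-sequence $\mathbf{y} = y, \mathbf{y}'$ of $\mathbf{I}, J$ with respect to $M$ of type $(\mathbf{k}, k_0+1)$, which is automatically a joint reduction. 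Passing to $M/yM$: by Remark \ref{re2.6aa} we have $\dim \overline{M/yM} = \dim\overline{M} - 1$, and $\dim (M/yM)/I(M/yM) \leq \dim M/IM$ while $\dim M/yM = \dim M - 1$ (as $y$ is $I$-filter-regular and $I \nsubseteq \sqrt{\mathrm{Ann}\,M}$, so $y$ avoids the minimal primes of maximal dimension), so the hypothesis $\dim M/IM < \dim M - |\mathbf{k}|$ is inherited in the form $\dim(M/yM)/I(M/yM) < \dim(M/yM) - |\mathbf{k} - \mathbf{e}_1|$. By the induction hypothesis, $\mathbf{y}'$ is a system of parameters for $M/yM$, whence $\mathbf{y} = y, \mathbf{y}'$ is a system of parameters for $M$.

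It remains to get from ``some joint reduction $\mathbf{y}$ of type $(\mathbf{k}, k_0+1)$ is a s.o.p.'' to ``every such joint reduction $\mathbf{x}$ is a s.o.p.'', and this is where Lemma \ref{le2019} does the work: swapping one element at a time between $\mathbf{x}$ and $\mathbf{y}$, I would show that if $\mathbf{x}$ and $\mathbf{y}$ agree in all but one entry, with that entry lying in the same $I_j$ (or in $J$), then one of them being a s.o.p. forces the other to be, by an argument parallel to the proof of Lemma \ref{le2019} — reducing modulo the common entries, localizing at the primes in $\Pi = \{\mathfrak p \in \mathrm{Min}\,M : \dim A/\mathfrak p = \dim M\}$, passing to the relevant power $I^m(\cdots)$ so that the differing elements become genuine reductions of $I_j$ (resp. a reduction of $J$) on a module of the right dimension, and invoking \cite[Theorem 1]{NR}. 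Since any two joint reductions of the same type can be connected by such single-entry moves through intermediate sequences (each intermediate one still a joint reduction, as one checks directly from the definition $\mathbb{I}^{\mathbf n}M = \sum(\frak I_i)\mathbb{I}^{\mathbf n - \mathbf e_i}M$ being insensitive to the choice of generators within each block), the s.o.p.\ property propagates from $\mathbf{y}$ to $\mathbf{x}$. I expect the main obstacle to be precisely this last transfer step: verifying that each intermediate single-swap sequence is still a joint reduction and that the dimension bookkeeping (ensuring $\dim B/C < \dim B$ after quotienting, so that the NR multiplicity comparison is legitimate) survives the full chain of swaps uniformly; the condition $\dim M/IM < \dim M - |\mathbf{k}|$ is exactly what is needed to keep this bookkeeping valid at every stage.
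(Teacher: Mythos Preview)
Your transfer step is where this breaks down, and the issues are not merely bookkeeping.

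First, the claim that single-element swaps between two joint reductions of the same type remain joint reductions is unjustified and in general false. The condition $\mathbb{I}^{\mathbf{n}}M = \sum_i (\frak I_i)\mathbb{I}^{\mathbf{n}-\mathbf{e}_i}M$ depends on the specific elements in each $\frak I_i$, not just on the ideals they generate: if $x_1,x_2$ and $y_1,y_2$ are both joint reductions, there is no reason $x_1,y_2$ should be one. Your parenthetical (``insensitive to the choice of generators within each block'') conflates generators of the ideal $(\frak I_i)$ with the actual sequence.

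Second, even granting that intermediate swaps were joint reductions, Lemma~\ref{le2019} cannot carry the s.o.p.\ property across a swap. That lemma \emph{assumes} both sequences are already systems of parameters and then equates their multiplicities via \cite[Theorem~1]{NR}. To run ``an argument parallel to'' its proof you would need the swapped sequence to be a s.o.p.\ before invoking the Northcott--Rees reduction formula; that is exactly what you are trying to prove. Nothing in that argument lets you deduce $\dim M/(\mathbf{x})M = 0$ from $\dim M/(\mathbf{y})M = 0$.

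The paper's proof is entirely different and avoids induction, weak-(FC)-sequences, and any auxiliary $\mathbf{y}$. Write $\mathbf{x} = \mathbf{x}_{\mathbf{I}}, U$ with $U \subset J$. The joint reduction condition makes $(U)$ a reduction of $J$ on $I^m\bigl[M/(\mathbf{x}_{\mathbf{I}})M\bigr]$ for $m\gg 0$, hence (since $J$ is $\mathfrak m$-primary) an ideal of definition there, hence by Lemma~\ref{no4.3a}(i) an ideal of definition of $M/(\mathbf{x}_{\mathbf{I}})M : I^\infty$; so this quotient has dimension at most $|U| = k_0+1$. The hypothesis $\dim M/IM < \dim M - |\mathbf{k}|$, combined with the automatic bound $\dim M/(\mathbf{x}_{\mathbf{I}})M \ge \dim M - |\mathbf{k}|$, forces every top-dimensional minimal prime of $M/(\mathbf{x}_{\mathbf{I}})M$ to avoid $I$, so these primes already lie in $\mathrm{Min}\bigl[M/(\mathbf{x}_{\mathbf{I}})M : I^\infty\bigr]$. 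Squeezing gives $\dim M/(\mathbf{x}_{\mathbf{I}})M = \dim M - |\mathbf{k}|$, so $\mathbf{x}_{\mathbf{I}}$ is part of a s.o.p.\ for $M$ and $U$ is a s.o.p.\ for the quotient; done.

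A minor side remark: your base case $|\mathbf{k}| = 0$ cites Lemma~\ref{no4.3a}(iii), but that only gives $\mathbf{x}$ as a s.o.p.\ for $\overline{M}$, not for $M$; you still need the coincidence of top-dimensional minimal primes of $M$ and $\overline{M}$. This is easy, but it is precisely the mechanism the paper's direct argument uses uniformly for all $|\mathbf{k}|$.
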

\begin{proof} Set $n = k_0 + |{\bf k}| +1.$
Let ${\bf x}=x_1,\ldots,x_n$ be a joint
reduction of ${\bf I}, J$ with respect to $M$ of the type $({\bf k}, k_0+1)$ with ${\bf x}_{\mathbf{I}} = x_1, \ldots,
x_{|\mathbf{k}| } \subset \mathbf{I}$ and $U = x_{|\mathbf{k}|+1},\ldots,x_n \subset J.$
 Then $(U)$ is a reduction of $J$ with respect to $I^m[M/({\bf x}_{\mathbf{I}})M]$ for all
large enough  $m.$ Since $J$ is $\frak m$-primary, therefore $(U)$ is an ideal of
definition of $I^m[M/({\bf x}_{\mathbf{I}})M]$ for all
large enough  $m.$ Hence $(U)$ is also an ideal of
definition of $M/({\bf x}_{\mathbf{I}})M:I^\infty$ by Lemma \ref{no4.3a} (i).
Since $\dim {M}/I{M} < \dim {M} - |\bf k|$ and $\dim {M}/({\bf x}_{\mathbf{I}}){M} \ge \dim  {M} - |\bf k|,$ it follows that if $\frak p \in \mathrm{Min}[{M}/({\bf x}_{\mathbf{I}}){M}]$ such that $\dim A/\frak p = \dim
{M}/({\bf x}_{\mathbf{I}}){M}$ then $I \nsubseteq \frak p.$ Hence $\frak p \in \mathrm{Min}[{M}/({\bf x}_{\mathbf{I}}){M}:I^\infty].$ So $\dim A/\frak p \le \dim {M}/({\bf x}_{\mathbf{I}}){M}:I^\infty.$ Since $(U)$ is an ideal of
definition of $M/({\bf x}_{\mathbf{I}})M:I^\infty,$ we get
$$\dim {M}/({\bf x}_{\mathbf{I}}){M}:I^\infty \le |U| = k_0+1= \dim \overline{M} - |{\bf k}| \le \dim {M} - |\bf k|.$$ Consequently, we obtain
$$\dim {M} - |{\bf k}| \le \dim
{M}/({\bf x}_{\mathbf{I}}){M}=\dim A/\frak p \le \dim {M}/({\bf x}_{\mathbf{I}}){M}:I^\infty \le \dim {M} - |\bf k|.$$ Thus, \begin{equation}\label{pt-001VT}\begin{aligned}\dim
{M}/({\bf x}_{\mathbf{I}}){M}=\dim {M}/({\bf x}_{\mathbf{I}}){M}:I^\infty =\dim {M} - |\bf k|\end{aligned}\end{equation}
 and
  \begin{equation}\label{pt-001VT0}\begin{aligned}&\{\frak p \in \mathrm{Min}[{M}/({\bf x}_{\mathbf{I}}){M}] \mid \dim A/\frak p = \dim
{M}/({\bf x}_{\mathbf{I}}){M}\}\\ &= \{\frak p \in \mathrm{Min}[{M}/({\bf x}_{\mathbf{I}}){M}:I^\infty] \mid \dim A/\frak p = \dim
{M}/({\bf x}_{\mathbf{I}}){M}:I^\infty\}. \end{aligned}\end{equation}
By (\ref{pt-001VT}),  ${\bf x}_{\mathbf{I}}$ is part of a system of parameters for ${M}$ and $U$ is a system of parameters for ${M}/({\bf x}_{\mathbf{I}}){M}:I^\infty.$ Since $U$ is a system of parameters for ${M}/({\bf x}_{\mathbf{I}}){M}:I^\infty,$ it follows by (\ref{pt-001VT0}) that
$U$ is a system of parameters for ${M}/({\bf x}_{\mathbf{I}}){M}.$
 So ${\bf x}$ is a system of parameters for ${M}.$
\end{proof}

\begin{theorem}\label{thm2.19vt} Let ${\bf x}$ be a joint
reduction of ${\bf I}, J$ with respect to $M$ of the type $({\bf k}, k_0+1)$ with  $k_0 + |{\bf k}| = \dim \overline M-1.$
 Assume that $\dim {M}/I{M} < \dim {M} - |\bf k|.$ Then ${\bf x}$ is a system of parameters for ${M}$ and
 $$e(J^{[k_0 +1]}, \mathbf{I}^{[\mathbf{k}]}; M) = e(\mathbf{x};
{M}).$$\end{theorem}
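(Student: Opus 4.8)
The first assertion — that $\mathbf x$ is a system of parameters for $M$ — is precisely Lemma~\ref{bd32}, so everything is in the multiplicity formula, which I will prove by induction on $|\mathbf k|$. The base case $|\mathbf k|=0$ is essentially already in hand: here $\mathbf x\subset J$ is a joint reduction of $\mathbf I,J$ of type $(\mathbf 0,k_0+1)$, so Lemma~\ref{no4.3a}(iii) gives $e(J^{[k_0+1]},\mathbf I^{[\mathbf 0]};M)=e(\mathbf x;\overline M)$; and since $\mathbf x$ is a system of parameters for $M$ with $k_0+1=\dim\overline M$ elements we get $\dim M=\dim\overline M$, so the hypothesis reads $\dim M/IM<\dim M$. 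Hence every minimal prime $\frak p$ of $M$ with $\dim A/\frak p=\dim M$ avoids $I$, so $(0_M:I^\infty)_\frak p=0$, $\ell_{A_\frak p}(M_\frak p)=\ell_{A_\frak p}(\overline M_\frak p)$, and these $\frak p$ are exactly the top-dimensional minimal primes of $\overline M$. The additivity formula for multiplicities (as in the proof of Lemma~\ref{le2019}) then yields $e(\mathbf x;\overline M)=e(\mathbf x;M)$.

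For the inductive step assume $|\mathbf k|\ge1$; reorder so that $k_1\ge1$ and write $\mathbf x=x_1,x_2,\ldots,x_n$ with $x_1\in I_1$. Choose a sufficiently general $y\in I_1$: then $y$ is a weak-(FC)-element of $\mathbf I,J$ with respect to $M$ (cf.\ Remark~\ref{re2.6aa}(iii)), and, since a sufficiently general element of $I_1$ can replace $x_1$ in a joint reduction (see \cite{Re,Oc}), the tuple $y,x_2,\ldots,x_n$ is again a joint reduction of $\mathbf I,J$ with respect to $M$ of type $(\mathbf k,k_0+1)$, hence a system of parameters for $M$ by Lemma~\ref{bd32}. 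The heart of the argument is the swap
$$e(x_1,x_2,\ldots,x_n;M)=e(y,x_2,\ldots,x_n;M),$$
which generalizes Lemma~\ref{le2019} from two to $n$ elements by the same mechanism: apply the additivity formula to pass to the domains $A/\frak p$ over the top-dimensional minimal primes $\frak p$ of $M$, peel off one at a time the $|\mathbf k|-1$ remaining elements of $\mathbf x$ lying in $\mathbf I$ (these are parameters, hence nonzerodivisors on the relevant domains, re-applying additivity after each step), and finally compare $(x_1)$ and $(y)$ as reductions of $I_1$ with respect to a suitable $I$-power of the resulting module via Rees's theorem \cite[Theorem~1]{NR}. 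Here the condition $\dim M/IM<\dim M-|\mathbf k|$, strictly stronger than the hypothesis $\dim M/IM<\dim M$ of Lemma~\ref{le2019}, is exactly what guarantees that after removing the $|\mathbf k|$ parameters coming from $\mathbf I$ the surviving minimal primes still avoid $I$, so that Rees's theorem applies.

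Granting the swap, the induction closes as follows. Since $(y,x_2,\ldots,x_n)$ is a system of parameters for $M$ we have $\dim M=n$ and $\dim M/yM=n-1$; as $y$ satisfies (FC2) and $\sqrt{I_1\cdots I_d J}=\sqrt I$, we get $0_M:y\subseteq 0_M:I^\infty$, so $\dim(0_M:y)\le\dim M/IM<\dim M-|\mathbf k|\le n-1$, whence $e(x_2,\ldots,x_n;0_M:y)=0$ and the multiplicity-symbol recursion gives $e(y,x_2,\ldots,x_n;M)=e(x_2,\ldots,x_n;M/yM)$. Next, Remark~\ref{re2.6aa}(ii) (with $i=1$, $k_1>0$) gives $e(J^{[k_0+1]},\mathbf I^{[\mathbf k]};M)=e(J^{[k_0+1]},\mathbf I^{[\mathbf k-\mathbf e_1]};M/yM)$. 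Moreover $x_2,\ldots,x_n$ is a joint reduction of $\mathbf I,J$ with respect to $M/yM$ of type $(\mathbf k-\mathbf e_1,k_0+1)$ (this passes routinely to the quotient, using (FC1) for $y$), and the hypotheses of the theorem hold for $M/yM$: by Remark~\ref{re2.6aa}, $\dim\overline{M/yM}\le\dim\overline M-1=\dim M/yM$, while $\dim(M/yM)/I(M/yM)\le\dim M/IM<\dim M-|\mathbf k|=\dim M/yM-|\mathbf k-\mathbf e_1|$, and the latter forces $\dim\overline{M/yM}=\dim M/yM=\dim\overline M-1$, so $k_0+|\mathbf k-\mathbf e_1|=\dim\overline{M/yM}-1$. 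By the induction hypothesis $e(x_2,\ldots,x_n;M/yM)=e(J^{[k_0+1]},\mathbf I^{[\mathbf k-\mathbf e_1]};M/yM)$, and chaining the displayed equalities gives $e(\mathbf x;M)=e(J^{[k_0+1]},\mathbf I^{[\mathbf k]};M)$.

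The main obstacle is the swap: making precise the inductive "peel and re-additivize" reduction to Rees's theorem and verifying, stage by stage, that the strengthened hypothesis $\dim M/IM<\dim M-|\mathbf k|$ keeps $I$ out of the minimal primes that survive each application of the additivity formula; the remaining genericity input (that a general $y\in I_1$ is a weak-(FC)-element and yields a joint reduction when substituted for $x_1$) is standard from the theory of joint reductions.
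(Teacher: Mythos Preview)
Your overall plan — induct on $|\mathbf{k}|$, handle the base case via Lemma~\ref{no4.3a}(iii), and in the step swap $x_1$ for a weak-(FC)-element $y$ before descending to $M/yM$ — is sound, and the verification that the hypotheses pass to $M/yM$ is correct. The problem is the swap itself, which you rightly flag as the main obstacle but whose sketch does not go through when $k_0>0$.

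Your plan for the swap is: pass to the domains $B=A/\mathfrak p$, peel off the $|\mathbf{k}|-1$ elements of $\mathbf{x}$ lying in $\mathbf{I}$ (other than $x_1$) by repeated additivity, and then ``compare $(x_1)$ and $(y)$ as reductions of $I_1$ with respect to a suitable $I$-power of the resulting module via Rees's theorem''. But after those $|\mathbf{k}|-1$ peels the resulting domain $C$ has dimension $k_0+2$, and neither $(x_1)$ nor $(y)$ alone is a reduction of $I_1$ with respect to $I^mC$: the $k_0+1$ elements $U=x_{|\mathbf{k}|+1},\ldots,x_n\subset J$ are still there, and what you actually have on $C$ are \emph{joint} reductions $x_1,U$ and $y,U$ of type $(\mathbf e_1,k_0+1)$. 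Only when $k_0=0$ does this land you in Lemma~\ref{le2019}. For $k_0>0$ you must also peel $J$-elements, and here the bookkeeping fails: modding out by an arbitrary $x_j\in J$ drops $\dim C$ by one but need not drop $\dim C/IC$, so the inequality you need to keep $I$ out of the surviving top-dimensional primes (namely $\dim(\cdot)/I(\cdot)<\dim(\cdot)-1$ at the penultimate stage) is not preserved.

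The paper resolves exactly this by inducting on $k_0+|\mathbf{k}|$ rather than on $|\mathbf{k}|$, and by a case analysis when $|\mathbf{k}|=1$: if $\dim B/IB>1$ one does \emph{not} peel an arbitrary $J$-element but chooses, via a Zariski-open argument inside $(U)/\mathfrak m(U)$, a special $u\in(U)$ with $\dim B/(u,I)B=\dim B/IB-1$, so that the dimension inequality survives the peel and the inductive hypothesis applies to $B/uB$; if $\dim B/IB\le1$ one can peel an ordinary $x_j\in J$ (or apply Lemma~\ref{le2019} directly when $k_0=1$). This choice of $u$ is the missing idea in your sketch; without it, your ``peel and re-additivize'' reduction to Rees's theorem does not close when $k_0>0$.
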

\begin{proof}
By Lemma \ref{bd32}, ${\bf x}$ is a system of parameters for ${M}$. Recall that $n = k_0 + |{\bf k}| +1$ and
 ${\bf x}=x_1,\ldots,x_n.$
Set $$\Pi = \{\frak p \in \mathrm{Min}{M}\mid \dim A/\frak p = \dim
{M}\}.$$
Note that  since ${\bf x}$ is a joint
reduction of ${\bf I}, J$ with respect to $M, $
${\bf x}$ is also a joint
reduction of ${\bf I}, J$ with respect to $A/\frak p$ for all  $\frak p \in \Pi$ (see e.g \cite[Lemma
17.1.4]{SH}). And since $\dim {M}/I{M} < \dim {M} - |\bf k|,$ it follows that
$\dim A/(\frak p, I) < \dim A/\frak p - |\bf k|$ for all  $\frak p \in \Pi.$ Indeed, assume that $\dim {M}/I{M} = \dim {M}-t,$ then
there exist $a_1,\ldots,a_t \in I$ such that $\dim {M}/(a_1,\ldots,a_t){M} = \dim {M}-t.$ So $a_1,\ldots,a_t$ is part of a system of parameters for ${M}.$ Hence $a_1,\ldots,a_t$ is also part of a system of parameters for $A/\frak p.$ Consequently,
$$\begin{aligned}&\dim A/(\frak p, I) \le \dim A/(\frak p, a_1,\ldots,a_t) = \dim A/\frak p -t = \dim {M}-t \\&= \dim {M}/I{M} < \dim {M} - |{\bf k}|=\dim A/\frak p - |\bf k|.\end{aligned}$$
So \begin{equation}\label{pt-001KV}\begin{aligned}
\dim A/(\frak p,I)A < \dim A/\frak p - |{\bf k}|. \end{aligned} \end{equation}
And   ${\bf x}$
is also a system of parameters for $A/\frak p$ for all  $\frak p \in \Pi$ since ${\bf x}$ is a system of parameters for ${M}.$

\noindent
{\bf Note 1.} Set $J= I_0.$ Assume that
$k_i>0$ and without loss of generality, assume that
$x_1\in I_i$ for $0 \le i \le d.$  We will prove that there exists $x\in I_i$
such that $x$ is a weak-(FC)-element and $x, x_2, \ldots,
x_n$ is a joint reduction of $
\mathbf{I}, J$ with respect to $A/\frak p$ of the type $({\bf k}, k_0+1)$ for
all  $\frak p \in \Pi.$
 Indeed, by \cite[Proposition 17.3.2]{SH},
 there exists a Zariski open subset $V$ of $I_i/\frak
mI_i$ such that if $x \in I_i$ with the image $x + \frak mI_i \in V,$
then $x, x_2, \ldots, x_n$ is a joint reduction of $
\mathbf{I}, J$ with respect to
 $A/\mathfrak{p}$ of the type $({\bf k}, k_0+1)$  for all $\mathfrak{p} \in \Pi.$
And since $x_1 + {\frak m} I_i \in V,$ we have $V \neq \emptyset.$
    On the other
hand, by \cite[Proposition 2.3]{VDT}, there exists a non-empty
Zariski open subset $W$ of $I_i/\frak mI_i$ such that if $x \in
I_i$ with $x + \frak mI_i \in W,$ then $x$ is a
weak-(FC)-element  of $\mathbf{I}, J$ with respect to
$A/\mathfrak{p}$ for all $\mathfrak{p} \in \Pi.$  Since $V, W$ are
non-empty Zariski open, it implies that $V\cap W$ is also
a non-empty Zariski open subset of $I_i/\frak
mI_i$. Hence there exists $x \in I_i$ such that
$x+ \frak mI_i \in V\cap W,$ then $x$ is a weak-(FC)-element and $x, x_2, \ldots, x_n$ is a joint reduction of $
\mathbf{I}, J$ with respect to
 $A/\mathfrak{p}$ of the type $({\bf k}, k_0+1)$  for all $\mathfrak{p} \in \Pi.$

 Now, we prove  by induction
 on $k_0+|{\bf k}|$ that
 \begin{equation}\label{eqtv1-2019}e(J^{[k_0+1]},\mathrm{\bf I}^{[\mathrm{\bf k}]}; M) =
e(\mathbf{x};{M}). \end{equation}

If $|{\bf k}| = 0$, then since $\bf x$ is a joint reduction of $
\mathbf{I}, J$ with respect to $M$ of the type $({\bf 0}, k_0+1)$,
by Lemma \ref{no4.3a} (iii) we get
  $e(J^{[k_0+1]},
\mathbf{I}^{[\mathbf{0}]}; M) = e(\mathbf{x};{M}/0_M:I^\infty).$ Since $I \nsubseteq \frak p$ for any $\mathfrak{p} \in \Pi,$ it follows that
$({M}/0_M:I^\infty)_{\frak p} = M_{\frak p}.$ Hence by (\ref{pt-001VT0}), we get
$e(\mathbf{x};{M}/0_M:I^\infty)= e(\mathbf{x};{M})$ (see e.g. \cite[Theorem 11.2.4]{SH}).
So $$e(J^{[k_0+1]},
\mathbf{I}^{[\mathbf{0}]}; M) = e(\mathbf{x};{M}).$$
 Hence we also get the proof of (\ref{eqtv1-2019}) in the  case $k_0+|\mathbf{k}| = 0.$

 Consider the case that $|\mathbf{k}| >0.$ And without loss of generality, assume that
$k_1>0$ and $x_1\in I_1.$ Recall that $e(J^{[k_0+1]},\mathrm{\bf I}^{[\mathrm{\bf k}]}; M)
=\sum_{\mathfrak{p}\in
\Pi}\ell_A(M_{\mathfrak{p}})e(J^{[k_0+1]},\mathrm{\bf
I}^{[\mathrm{\bf k}]}; A/\mathfrak{p})$ by \cite[Theorem 3.2]{VT1}.
Hence to have (\ref{eqtv1-2019}), we need to prove that
\begin{equation}\label{eqtv111} e(J^{[k_0+1]},\mathrm{\bf I}^{[\mathrm{\bf k}]}; A/\mathfrak{p}) =
e({\bf x}; A/\mathfrak{p})\end{equation}
 for any $\mathfrak{p} \in \Pi.$ Let $B= A/\mathfrak{p},$ $\mathfrak{p} \in \Pi.$
By Note 1, there exists $x \in I_1$ such that
$x$ is a weak-(FC)-element and $x, x_2,\ldots,x_n$ is a joint reduction of $
\mathbf{I}, J$ with respect to  $B.$
Since $x$ is a weak-(FC)-element of
$\mathbf{I}, J$ with respect to $B$, by  Remark \ref{re2.6aa}
(ii), $$e(J^{[k_0+1]},
\mathbf{I}^{[\mathbf{k}]}; B)=e(J^{[k_0+1]},
\mathbf{I}^{[\mathbf{k-e}_1]}; B/xB).$$  By (\ref{pt-001KV}), $\dim B/IB < \dim B - |{\bf k}|.$ From this it follows that
$$\begin{aligned}&\dim (B/xB)/I(B/xB) = \dim B/(x,I)B \le \dim B/IB \\&< \dim B - |{\bf k}|= \dim B/xB - |{\bf k}-{\bf e}_1|.\end{aligned}$$
Consequently, \begin{equation}\label{eqtv2019}\dim (B/xB)/I(B/xB) < \dim B/xB - |{\bf k}-{\bf e}_1|. \end{equation}
Note that since $x, x_2,\ldots,x_n$ is a joint reduction of $
\mathbf{I}, J$ with respect to  $B$ of the type $(\mathbf{k}, k_0+1),$ $x_2,\ldots,x_n$ is a joint reduction of $
\mathbf{I}, J$ with respect to  $B/xB$ of the type $(\mathbf{k-e}_1, k_0+1).$ Hence by the
inductive hypothesis, $$e(J^{[k_0+1]},
\mathbf{I}^{[\mathbf{k-e}_1]}; B/xB) = e(x_2,\ldots,x_n;B/xB).$$
Since $x$ is not  a  zero divisor of $B,$ $e(x_2,\ldots,x_n; B/xB) = e(x,x_2,\ldots,x_n; B)$
by \cite[Page 641, lines 27-28, (D)]{AB}. Consequently,  \begin{equation}\label{eqtv20}e(J^{[k_0+1]},\mathbf{I}^{[\mathbf{k}]}; B)=e(x,x_2,\ldots,x_n; B).\end{equation}
So to prove (\ref{eqtv111}), we need to show that
 \begin{equation}\label{eqtv21}e(x_1,x_2,\ldots,x_n; B)=e(x,x_2,\ldots,x_n; B)\end{equation}
via the following cases.

{\bf Case 1:} $k_0+|\mathbf{k}| =1.$

Then $k_0 =0.$ Hence since $\dim B/IB < \dim B - |{\bf k}| < \dim B$ by (\ref{pt-001KV}), we get the proof of (\ref{eqtv21}) by Lemma \ref{le2019}.

{\bf Case 2:} $k_0+|{\bf k}| > 1$ and $|{\bf k}| = 1$ and $\dim B/IB > 1.$

In this case, $k_0 > 0$ and $n \ge 3,$ $x_1 \in I_1$ and $x_2,\ldots, x_n \in J.$
Set $U=x_2,\ldots, x_n.$ Then $\dim B/(U,I)B \le \dim B/(U)B = 1 < \dim B/IB.$
By \cite[Proposition 17.3.2]{SH}, $x_2,\ldots,x_n \in J\setminus{\frak m}J.$
Denote by $x_2',\ldots, x_n'$ the images of $x_2,\ldots, x_n$ in $J/{\frak m}J,$ respectively.
Since $x_2,\ldots, x_n$ is part of the system of parameters ${\bf x},$ it follows that $x_2',\ldots, x_n'$ is part of a basis of $k$-vector space $J/{\frak m}J.$ So $(U)\cap {\frak m}J = {\frak m}(U)$ (see e.g. \cite[Proposition 8.3.3(1)]{SH}).
Hence $((U) + {\frak m}J)/{\frak m}J \cong (U)/{\frak m}(U).$ Consequently,  we can consider $(U)/{\frak m}(U)$ as a $k$-vector subspace of $J/\frak m
J.$  Now, assume  that $P_1,P_2,\ldots, P_t$ are all the prime ideals of $\mathrm{Min}[B/IB]$ such that
$$\dim B/IB  = \mathrm{Coht} P_j\quad (1 \le j \le t).$$  For each $j = 1, \ldots, t$, let $W_j$ be the image of $P_j \bigcap (U)$ in $J/\frak mJ$.
Since $$\dim B/IB > \dim B/(U,I)B,$$ $(U)\setminus \bigcup_{j=1}^tP_j \not=  \emptyset$ by Prime Avoidance.
Hence $W_1, \ldots, W_t$ are proper $k$-vector subspaces of $(U)/\frak m(U)$  by Nakayama's lemma.
Since $k$ is an infinite field,
$$\Lambda = [(U)/\frak m(U)] \setminus \bigcup_{j=1}^tW_j$$
is a non-empty Zariski open subset of $(U)/\frak m(U)$. By \cite[Proposition 17.3.2]{SH},
 there exists a Zariski open subset $\Gamma$ of $J/\frak
mJ$ such that if $x \in J$ with the image $x' \in \Gamma,$
then $x_1, x, \ldots, x_n$ is a joint reduction of $
\mathbf{I}, J$ with respect to
 $B$ of the type $({\bf k}, k_0+1).$ Remember that $x'_2 \in \Gamma \cap [(U)/\frak m(U)]$ since $x_1, x_2, \ldots, x_n$ is a joint reduction.
  So $$\Gamma'=\Gamma \cap [(U)/\frak m(U)]    \not=  \emptyset.$$ Hence
    $\Gamma' \cap \Lambda \not=  \emptyset.$
Now, let $u \in (U)$ such that  the image $u'\in \Gamma' \cap \Lambda.$ Then we have
 $$\dim B/(u,I)B = \dim B/IB -1$$ and $x_1, u, \ldots, x_n$ is a joint reduction of $
\mathbf{I}, J$ with respect to
 $B.$ In this case, $x_1, u, \ldots, x_n$  is a system of parameters for $B.$ Hence
  it can be verified that $u',x_3',\ldots,x_n'$ is also a basis of $k$-vector space $(U)/{\frak m}(U).$ Consequently, $(U)= (u,x_3,\ldots,x_n)$ by Nakayama's lemma.
 So $$(x_1, x_2,\ldots, x_n)= (x_1,u,x_3,\ldots,x_n) \; \mathrm{and}\; (x, x_2,\ldots, x_n)= (x,u,x_3,\ldots,x_n).$$
 Moreover in this case, $\dim B/IB < \dim B -|{\bf k}|$ by (\ref{pt-001KV}), and hence
     $$\begin{aligned} &\dim (B/uB)/I(B/uB)=\dim B/(u, I)B = \dim B/IB -1\\ &< \dim B -|{\bf k}| -1 = \dim B/uB -|{\bf k}|.\end{aligned}$$  Note that $x,x_3,\ldots,x_{n}$ and $x_1,x_3,\ldots,x_{n}$ are joint reductions with respect to $B/uB$ of the type $({\bf k}, k_0).$ Hence by the
inductive hypothesis on $k_0 + |{\bf k}|$, we have $$e(x,x_3,\ldots, x_n;B/uB)=e(J^{[(k_0-1)+1]},\mathrm{\bf I}^{[\mathrm{\bf k}]};
B/uB)= e(x_1,x_3,\ldots, x_n;B/uB).$$ On the other hand, since $u$ is not  a  zero divisor of $B$, by \cite[Page 641, lines 27-28, (D)]{AB}, we get
$$e(x,u,x_3,\ldots, x_n;B) = e(x_1,u,x_3,\ldots, x_n;B).$$ Recall that $(x_1, x_2,\ldots, x_n)= (x_1,u,x_3,\ldots,x_n), (x, x_2,\ldots, x_n)= (x,u,x_3,\ldots,x_n).$
Consequently, $$e(x,x_2,x_3,\ldots, x_n;B) = e(x_1,x_2,x_3,\ldots, x_n;B).$$
Therefore, we obtain the proof of (\ref{eqtv21}) in this case.

{\bf Case 3:} $k_0+|{\bf k}| > 1$ and $|{\bf k}| = 1$ and $\dim B/IB \le 1.$

In this case, $k_0 > 0.$ Consider the case $k_0 = 1.$ Since  $x, x_2$ and $x_1, x_2$ are both joint reductions of $\mathbf{I}, J$ with respect to $B/x_3B$, and are systems of parameters for ${B/x_3B},$ and on the other hand, $$\dim (B/x_3B)/I(B/x_3B)=\dim B/(x_3, I)B \leq \dim B/IB\leq 1< 2= \dim B/x_3B,$$ by Lemma \ref{le2019}, we get $e(x,x_2; B/x_3B) = e(x_1, x_2; B/x_3B)$.
Remember that  $\bf x$ is  a system of
parameters for  $M$. Hence $x_3\notin \frak{p}$. Consequently
 $x_3$ is not  a  zero divisor of $B$.
 So we have $e(x, x_2,x_3;B)= e(x_1, x_2,x_3;B)$   by \cite[Page 641, lines 27-28, (D)]{AB}.

If $k_0 > 1,$ then $\dim B = n \ge 4.$
Then $x_2, x_3, x_4, \dots, x_n \in J$ and we have
$$\dim (B/x_2B)/I(B/x_2B)=\dim B/(x_2, I)B \le \dim B/IB \le 1 < \dim B/x_2B -|{\bf k}|.$$
Consequently, by the
inductive hypothesis on $k_0 + |{\bf k}|$, $$e(x, x_3,\ldots, x_n;B/x_2B)=e(J^{[(k_0-1)+1]},\mathrm{\bf I}^{[\mathrm{\bf k}]};
B/x_2B)= e(x_1,x_3,\ldots, x_n;B/x_2B).$$ Now, recall that $\bf x$ is  a system of
parameters for  $M$. Hence $x_2\notin \frak{p}$. So $x_2$ is not  a  zero divisor of $B.$ Therefore,
$$e(x,x_2,x_3,\ldots, x_n;B) = e(x_1,x_2,x_3,\ldots, x_n;B).$$ We get (\ref{eqtv21}).

{\bf Case 4:} $k_0+|{\bf k}| > 1$ and $|{\bf k}| > 1.$

Then $x_2 \in I_i$ for some $1 \le i \le d.$ Hence by (\ref{eqtv2019}), $$\dim (B/x_2B)/I(B/x_2B) < \dim B/x_2B - |{\bf k}-{\bf e}_i|.$$
Therefore, by the
inductive hypothesis on $k_0 + |{\bf k}|$, $$e(x, x_3,\ldots, x_n;B/x_2B)=e(J^{[k_0+1]},\mathrm{\bf I}^{[\mathrm{\bf k-e}_i]};
B/x_2B)= e(x_1,x_3,\ldots, x_n;B/x_2B).$$ Hence since $x_2$ is not  a  zero divisor of $B$,
$$e(x,x_2,x_3,\ldots, x_n;B) = e(x_1,x_2,x_3,\ldots, x_n;B).$$ We have (\ref{eqtv21}).

From the above cases, we get the proof of (\ref{eqtv21}). Consequently, by $(\ref{eqtv20}),$ we get (\ref {eqtv111}), that
$$e(J^{[k_0+1]},\mathrm{\bf I}^{[\mathrm{\bf k}]}; A/\mathfrak{p}) =
e({\bf x}; A/\mathfrak{p})$$ for all
 $\mathfrak{p} \in \Pi.$
Hence
$$\sum_{\mathfrak{p}\in \Pi}
\ell_A(M_{\mathfrak{p}})e(J^{[k_0+1]},\mathrm{\bf I}^{[\mathrm{\bf
k}]}; A/\mathfrak{p}) = \sum_{\mathfrak{p}\in
\Pi}\ell_A(M_{\mathfrak{p}}) e(\mathbf{x}; A/\mathfrak{p}).$$
 On the other hand, $e(\mathbf{x}; {M})
=\sum_{\mathfrak{p}\in \Pi}\ell_A(M_{\mathfrak{p}})e(\mathbf{x};
A/\mathfrak{p})$ (see e.g. \cite[Theorem 11.2.4]{SH}) and
$$e(J^{[k_0+1]},\mathrm{\bf I}^{[\mathrm{\bf k}]}; M)
=\sum_{\mathfrak{p}\in
\Pi}\ell_A(M_{\mathfrak{p}})e(J^{[k_0+1]},\mathrm{\bf
I}^{[\mathrm{\bf k}]}; A/\mathfrak{p})$$ by \cite[Theorem 3.2]{VT1}.
Therefore, we get (\ref{eqtv1-2019}) in the case that $|{\bf k}| > 0,$ and hence the proof of (\ref{eqtv1-2019}) is complete, that
$e(J^{[k_0+1]},\mathrm{\bf I}^{[\mathrm{\bf k}]}; M)= e(\mathbf{x};
{M}).$
\end{proof}

\begin{remark}\label{rm35}
From Theorem \ref{thm2.19vt}, one may raise a question:
 Does this theorem hold if $\dim {M}/I{M} \ge \dim {M} - |\bf k|$?
 Consider the case $M=A$ and $\dim A =2.$ Let $x_1, x_2$ be a system of parameters for $A.$ Set $\frak I = x_1A$ and $J = (x_1, x_2)A.$ Then $J$ is an $\frak{m}$-primary ideal of $A$ and
$x_1, x_2$ is a joint reduction  of
 $\frak I, J$ with respect to $A$ of the type $(1, 1)$ since $\frak I^nJ^m = x_1\frak I^{n-1}J^m + x_2\frak I^nJ^{m-1}$ for all $n, m \ge 1.$
 In this case, ${\bf k} = (1),$ $k_0 =0$ and $\dim A/\frak IA = 1 = \dim A - |\bf k|.$
  It can be verified that $(x_1) \cap \frak I^{n}J^m = x_1\frak I^{n-1}J^m$ for all $n \ge 1; m \ge 0$ and $0_A: x_1 \subset 0_A: \frak I^\infty.$
  Hence  $x_1$ is a weak-(FC)-element
 of $\frak{I}, J$ with respect to $A.$ Consequently, by Remark \ref{re2.6aa} (ii), we get $e(J^{[1]}, \frak I^{[1]}; A)= e(J^{[1]}, \frak I^{[0]}; A/x_1A).$ Now, since $\frak I = x_1A,$ $\frak I[A/x_1A]= 0.$ So
 we obtain $e(J^{[1]}, \frak I^{[0]}; A/x_1A)=0$.
 Thus
 $e(J^{[1]}, \frak I^{[1]}; A)=0.$ Hence $$e(J^{[1]}, \frak I^{[1]}; A) \not= e(x_1, x_2; A).$$ This shows that
  Theorem \ref{thm2.19vt} does not hold in general if one omits  the assumption $\dim {M}/I{M} < \dim {M} - |\bf k|.$
\end{remark}

Now, we return \cite[Theorem 3.1]{VDT} that covers Rees's theorem \cite[Theorem\ 2.4 (i), (ii)]{Re}  by
the following stronger result than \cite[Theorem 3.1]{VDT}.

\begin{corollary}\label{co4.0a} Let
 ${\bf x}$ be  a joint
reduction of  $\mathbf{I}, J$ with respect to $M$ of the type
$({\bf k},k_0+1)$ with  $k_0 + |{\bf k}| = \dim \overline M-1.$  Assume that
$\mathrm{ht}\Big(\dfrac{I+\mathrm{Ann}M}{\mathrm{Ann}M}\Big) >
|\bf k|.$ Then ${\bf x}$ is a system of parameters for ${M}$ and
 $$e(J^{[k_0 +1]}, \mathbf{I}^{[\mathbf{k}]}; M) = e(\mathbf{x};
{M}).$$
\end{corollary}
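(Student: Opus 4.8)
The plan is to deduce Corollary \ref{co4.0a} directly from Theorem \ref{thm2.19vt} by showing that the height hypothesis $\mathrm{ht}\big((I+\mathrm{Ann}M)/\mathrm{Ann}M\big) > |\mathbf k|$ implies the weaker hypothesis $\dim M/IM < \dim M - |\mathbf k|$ used in the theorem. Once this implication is established, Theorem \ref{thm2.19vt} applies verbatim and gives both conclusions: that $\mathbf x$ is a system of parameters for $M$ and that $e(J^{[k_0+1]},\mathbf I^{[\mathbf k]};M)=e(\mathbf x;M)$. So there is essentially one thing to check, a dimension/height bookkeeping fact.

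For the implication, first I would reduce to the case where $M$ is faithful, i.e.\ pass to $\bar A = A/\mathrm{Ann}M$; this is harmless since $\dim M$, $\dim M/IM$, and the image of $I$ are all computed over $\bar A$, and $\mathrm{ht}(I\bar A)$ is exactly the quantity in the hypothesis. Then the claim becomes: if $\mathrm{ht}(I\bar A) > |\mathbf k|$ then $\dim M/IM < \dim M - |\mathbf k|$. The standard inequality relating Krull dimension and height over the support of $M$ is $\dim M - \dim M/IM \ge \mathrm{ht}_{M}(I) := \min\{\mathrm{ht}(\mathfrak p/\mathrm{Ann}M) : \mathfrak p \in \mathrm{Supp}(M/IM)\}$. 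I would invoke this (e.g.\ via the fact that for any $\mathfrak p \in \mathrm{Min}(M/IM)$ one has $\dim A/\mathfrak p \le \dim M$ and, when $M$ is equidimensional or more generally by the dimension inequality $\mathrm{ht}(\mathfrak p/\mathrm{Ann}M) + \dim A/\mathfrak p \le \dim M$, which holds on the support of a finitely generated module) to conclude $\dim M/IM \le \dim M - \mathrm{ht}(I\bar A) < \dim M - |\mathbf k|$.

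The step I expect to be the main obstacle is precisely this last inequality, because $\dim M/\mathfrak p + \mathrm{ht}(\mathfrak p)=\dim M$ can fail for non-equidimensional situations, so one must argue only using $\mathfrak p \in \mathrm{Min}(M/IM)$ with $\dim A/\mathfrak p = \dim M/IM$ and track that such $\mathfrak p$ contains $I$ but need not be minimal in $\mathrm{Supp}M$. The clean way around this is: pick $\mathfrak p \in \mathrm{Supp}(M/IM)$ with $\dim A/\mathfrak p = \dim M/IM$; then $\mathfrak p \supseteq I + \mathrm{Ann}M$, so by definition of height there is a chain of primes of length $\mathrm{ht}((I+\mathrm{Ann}M)/\mathrm{Ann}M)$ descending from $\mathfrak p$ inside $\mathrm{Supp}M$ down to a minimal prime $\mathfrak q$ of $M$; hence $\dim A/\mathfrak q \ge \dim A/\mathfrak p + \mathrm{ht}((I+\mathrm{Ann}M)/\mathrm{Ann}M) > \dim M/IM + |\mathbf k|$, and since $\dim A/\mathfrak q \le \dim M$ we get $\dim M/IM < \dim M - |\mathbf k|$ as required.

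Having verified $\dim M/IM < \dim M - |\mathbf k|$, I would simply write: ``Now Theorem \ref{thm2.19vt} applies and gives the assertions.'' One should also remark that the hypothesis $k_0 + |\mathbf k| = \dim\overline M - 1$ is carried over unchanged, and that the conclusion recovers \cite[Theorem 3.1]{VDT} because there $\mathbf x$ was \emph{assumed} to be a system of parameters whereas here that is derived; this is the sense in which the corollary is strictly stronger, and it is worth a sentence at the end. No further calculation is needed beyond the height-to-dimension reduction sketched above.
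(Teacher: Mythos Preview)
Your proposal is correct and follows exactly the paper's approach: the paper's proof is a single sentence asserting that $\mathrm{ht}\big((I+\mathrm{Ann}M)/\mathrm{Ann}M\big) > |\mathbf k|$ implies $\dim M/IM < \dim M - |\mathbf k|$ and then invokes Theorem~\ref{thm2.19vt}. You supply a valid chain-concatenation argument for this implication (which the paper leaves implicit), so your write-up is simply a more detailed version of the same proof.
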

\begin{proof} Since $\mathrm{ht}\Big(\dfrac{I+\mathrm{Ann}M}{\mathrm{Ann}M}\Big) >
|\bf k|,$ we have
 $\dim M/IM < \dim M - |{\bf k|}$. Hence from Theorem \ref{thm2.19vt} we get the proof.
\end{proof}

Finally, we would like to give the following result which shows that mixed multiplicities are
the Hilbert-Samuel
multiplicity of weak-(FC)-sequences.

\begin{corollary}\label{thm2.19v} Let ${\bf x}$
 be a weak-$\mathrm{(FC)}$-sequence of $\mathbf{I},
J$ with respect to $M$ of the type $({\bf
k},k_0+1)$ with  $k_0 + |{\bf k}| = \dim \overline M-1.$  Assume that $\dim {M}/I{M} < \dim {M} - |\bf k|.$ Then ${\bf x}$ is a system of parameters for ${M}$ and
 $e(J^{[k_0 +1]}, \mathbf{I}^{[\mathbf{k}]}; M) = e(\mathbf{x};
{M}).$\end{corollary}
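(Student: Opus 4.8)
The plan is to reduce Corollary~\ref{thm2.19v} to Theorem~\ref{thm2.19vt}. The only gap between the two statements is that here $\mathbf{x}$ is merely assumed to be a weak-(FC)-sequence of $\mathbf{I}, J$ with respect to $M$ of the type $(\mathbf{k}, k_0+1)$, whereas Theorem~\ref{thm2.19vt} requires $\mathbf{x}$ to be a \emph{joint reduction} of $\mathbf{I}, J$ with respect to $M$ of that type. So the key step is to show that, under the hypothesis $k_0 + |\mathbf{k}| = \dim\overline{M} - 1$, every weak-(FC)-sequence of the type $(\mathbf{k}, k_0+1)$ is automatically a joint reduction of that type. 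But this is precisely the content of Remark~\ref{re2.6aa}(v): since $|\mathbf{k}| + k_0 = \dim\overline{M} - 1$, one has $\deg P(n_0, \mathbf{n}, J, \mathbf{I}, M) = |\mathbf{k}| + k_0$, hence $\bigtriangleup^{(k_0+1, \mathbf{k})}P(n_0, \mathbf{n}, J, \mathbf{I}, M) = 0$; applying Remark~\ref{re2.6aa}(i) repeatedly along the sequence $\mathbf{x}$ gives $P(n_0, \mathbf{n}, J, \mathbf{I}, M/(\mathbf{x})M) = 0$, and then Remark~\ref{re2.6aa}(iv) yields that $\mathbf{x}$ is a joint reduction of $\mathbf{I}, J$ with respect to $M$ of the type $(\mathbf{k}, k_0+1)$.

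Once that observation is in place, the proof is essentially one line: invoke Theorem~\ref{thm2.19vt} with this joint reduction $\mathbf{x}$ (the hypothesis $\dim M/IM < \dim M - |\mathbf{k}|$ is carried over verbatim), concluding that $\mathbf{x}$ is a system of parameters for $M$ and $e(J^{[k_0+1]}, \mathbf{I}^{[\mathbf{k}]}; M) = e(\mathbf{x}; M)$. So I would write: "By Remark~\ref{re2.6aa}(v), $\mathbf{x}$ is a joint reduction of $\mathbf{I}, J$ with respect to $M$ of the type $(\mathbf{k}, k_0+1)$. Now apply Theorem~\ref{thm2.19vt}."

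I do not anticipate any real obstacle here — the corollary is designed to be an immediate consequence of the main theorem, and all the machinery (Remark~\ref{re2.6aa}(i),(iv),(v)) has already been set up in Section~2 exactly for this purpose. The only thing to be slightly careful about is making sure the types line up: a weak-(FC)-sequence of type $(\mathbf{k}, k_0+1)$ consists of $k_i$ elements of $I_i$ for $1 \le i \le d$ followed by $k_0+1$ elements of $J$, which is exactly the shape of sequence that Remark~\ref{re2.6aa}(v) turns into a joint reduction of type $(\mathbf{k}, k_0+1)$, and exactly the shape Theorem~\ref{thm2.19vt} accepts. So the write-up is genuinely short.

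\begin{proof}
Since $k_0 + |\mathbf{k}| = \dim\overline{M} - 1$ and $\mathbf{x}$ is a weak-(FC)-sequence of $\mathbf{I}, J$ with respect to $M$ of the type $(\mathbf{k}, k_0+1)$, by Remark~\ref{re2.6aa}(v) it follows that $\mathbf{x}$ is a joint reduction of $\mathbf{I}, J$ with respect to $M$ of the type $(\mathbf{k}, k_0+1)$. Since moreover $\dim M/IM < \dim M - |\mathbf{k}|$, Theorem~\ref{thm2.19vt} applies and yields that $\mathbf{x}$ is a system of parameters for $M$ and
$$e(J^{[k_0+1]}, \mathbf{I}^{[\mathbf{k}]}; M) = e(\mathbf{x}; M).$$
\end{proof}
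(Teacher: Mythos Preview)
Your proof is correct and matches the paper's own proof essentially verbatim: invoke Remark~\ref{re2.6aa}(v) to conclude that the weak-(FC)-sequence $\mathbf{x}$ is a joint reduction of $\mathbf{I}, J$ with respect to $M$ of the type $(\mathbf{k}, k_0+1)$, and then apply Theorem~\ref{thm2.19vt}. There is nothing to add.
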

\begin{proof} By  Remark
\ref{re2.6aa} (v), ${\bf x}$ is a joint reduction  of $\mathbf{I},
J$ with respect to $M$ of the type $({\bf k},k_0+1).$ Hence the corollary is
shown by Theorem \ref{thm2.19vt}. \end{proof}

\noindent {\bf Acknowledgement:} {\small Published: 26 October 2019  in  Bulletin of the Brazilian Mathematical Society, New Series. This research is funded by Vietnam National Foundation for Science and Technology Development (NAFOSTED) under grant number 101.04.2015.01.}


\noindent
Department of Mathematics, Hanoi National University of Education\\
136 Xuan Thuy Street, Hanoi, Vietnam\\
 Emails: thanhtth@hnue.edu.vn \;and\;
vduong99@gmail.com


\begin{thebibliography}{99}{\small
\bibitem{AB} M. Auslander,  D. A. Buchsbaum, {\it Codimension and multiplicity}, Ann. Math. 68(1958), 625-657.
\bibitem{BH1} W. Bruns, J. Herzog, {\it Cohen-Macaulay rings}, Cambridge Studies in Advanced  Mathematics, 39, Cambridge,
 Cambridge University Press, 1993.
\bibitem{CP} R. Callejas-Bedregal,  V. H. Jorge P�erez, {\it  Mixed multiplicities and the minimal number of generator of modules}, J. Pure Appl.
Algebra 214 (2010), 1642-1653.

 \bibitem{DMT} L. V. Dinh, N. T. Manh, T. T. H. Thanh, {\it On
 some superficial sequences,}
Southeast Asian Bull. Math. 38 (2014), 803-811.
\bibitem{DV1} L. V. Dinh, D. Q. Viet, {\it On two results of mixed multiplicities}, Int. J. Algebra 4(1) 2010, 19-23.
\bibitem{DV} L. V. Dinh,  D. Q. Viet,
{\it On mixed multiplicities of good filtrations}, Algebra Colloq.
22, 421 (2015) 421-436.



\bibitem{HHRT} M. Herrmann, E.  Hyry,  J.
Ribbe, Z. Tang,  {\it Reduction numbers and multiplicities of
multigraded structures},  J. Algebra 197 (1997), 311-341.
\bibitem{KR1} D. Kirby,  D. Rees, {\it Multiplicities in graded rings I: the general theory},
Contemporary Mathematics 159 (1994), 209-267.

\bibitem{KV} D. Katz,  J. K. Verma, {\it Extended Rees algebras and mixed multiplicities},
Math. Z. 202 (1989), 111-128.

\bibitem{MV} N. T. Manh,  D. Q. Viet, {\it Mixed  multiplicities of modules over Noetherian
local rings}, Tokyo J. Math. 29 (2006), 325-345.
\bibitem{NR} D. G.  Northcott, D. Rees, {\it Reduction of ideals in local rings},  Proc. Cambridge Phil.
Soc. 50(1954), 145-158.

  \bibitem{Oc} L. O'Carroll, {\it On two theorems concerning reductions in local rings}, J. Math. Kyoto Univ. 27-1(1987), 61-67.
\bibitem{Re} D. Rees, {\it Generalizations of reductions and mixed multiplicities}, J. London.
Math. Soc. 29 (1984), 397-414.

\bibitem{Sw} I. Swanson, {\it Mixed multiplicities, joint reductions and quasi-unmixed
local rings }, J. London Math. Soc. 48 (1993), no. 1, 1-14.
\bibitem{SH} C. Huneke, I. Swanson, {\it Integral Closure of Ideals, Rings, and Modules}, London Mathematical Lecture Note Series 336, Cambridge University Press (2006).
\bibitem{Te} B. Teissier, {\it Cycles \'evanescents, sections planes,
et conditions de Whitney}, Singularities \`a  Carg\`ese, 1972.
Ast\`erisque, 7-8 (1973), 285-362.
\bibitem{htv} T.  T.  H.  Thanh, D.  Q. Viet, {\it Mixed multiplicities of maximal degrees,} J. Korean Math. Soc. 55 (2018), No. 3, 605-622.
\bibitem{TV2} T. T. H. Thanh,  D. Q. Viet, {\it Mixed multiplicities and the multiplicity of Rees modules of reductions,} J. Algebra Appl. Vol. 18, No. 9 (2019) 1950176 (13 pages).
\bibitem{Tr2} N. V. Trung, {\it Positivity of mixed multiplicities}, J. Math. Ann. 319(2001), 33 - 63.
\bibitem{TV} N. V. Trung,   J. Verma, {\it   Mixed  multiplicities of ideals versus mixed
volumes of polytopes}, Trans. Amer. Math. Soc. 359 (2007),
4711-4727.
\bibitem{Ve} J. K. Verma, {\it Multigraded  Rees algebras and mixed multiplicities}, J. Pure
and  Appl.  Algebra 77 (1992), 219-228.
\bibitem{Vi} D. Q. Viet, {\it Mixed multiplicities of arbitrary ideals in local rings},
Comm. Algebra. 28(8) (2000), 3803-3821.
\bibitem{Vi1} D. Q. Viet, {\it On some properties of $(FC)$-sequences of ideals in local rings},
Proc. Amer. Math. Soc. 131 (2003), 45-53.
\bibitem{Vi2} D. Q. Viet, {\it Sequences determining mixed multiplicities and reductions
of ideals}, Comm. Algebra. 31 (2003), 5047-5069.
\bibitem{Vi4} D. Q. Viet, {\it Reductions and mixed multiplicities of ideals},
Comm. Algebra. 32 (2004), 4159-4178.
\bibitem{Vi5} D. Q. Viet, {\it The multiplicity and the Cohen-Macaulayness of extended Rees
algebras of equimultiple ideals}, J. Pure and Appl. Algebra 205
(2006), 498-509.
\bibitem{DQV} D. Q. Viet, {\it On the Cohen-Macaulayness of fiber cones},  Proc. Amer. Math. Soc. 136 (2008),  4185-4195.
\bibitem{VDT} D. Q. Viet, L. V. Dinh, T. T. H. Thanh,
{\it A note on joint reductions and mixed multiplicities}, Proc.
Amer. Math. Soc. 142 (2014), 1861-1873.
 \bibitem{Vi6} D. Q. Viet,  N. T. Manh,  {\it Mixed multiplicities of multigraded modules},
Forum Math. 25 (2013), 337-361.
\bibitem{VT0} D. Q. Viet, T. T. H. Thanh, {\it Multiplicity and Cohen-Macaulayness of fiber cones
of good filtrations}, Kyushu J. Math. 65(2011), 1-13.
\bibitem{VT} D. Q. Viet,  T. T. H. Thanh, {\it On $(FC)$-sequences and mixed multiplicities
of multigraded algebras}, Tokyo J. Math. 34 (2011), 185-202.
\bibitem{VT1} D. Q. Viet,  T. T. H. Thanh, {\it On  some  multiplicity and mixed multiplicity
formulas}, Forum Math. 26 (2014), 413-442.
\bibitem{VT2} D. Q. Viet,  T. T. H. Thanh, {\it A  note on formulas transmuting  mixed
multiplicities}, Forum Math. 26 (2014), 1837-1851.
\bibitem{VT3} D. Q. Viet,  T. T. H. Thanh, {\it The
Euler-Poincar\'{e} characteristic and  mixed multiplicities},
Kyushu J. Math. 69 (2015),  393-411.
\bibitem{VT4} D. Q. Viet,   T. T. H. Thanh,
{\it On the filter-regular sequences of multi-graded modules},
Tokyo J. Math. 38 (2015), 439-457.}
\end{thebibliography}
\end{document}